\newtheorem{theorem}{Theorem}
\newtheorem{proposition}[theorem]{Proposition}
\newtheorem{corollary}[theorem]{Corollary}
\theoremstyle{definition}
\newtheorem{example}[theorem]{Example}
\definecolor{webgreen}{rgb}{0,.5,0}
\definecolor{webbrown}{rgb}{.6,0,0}
\newcommand{\seqnum}[1]{\href{http://www.research.att.com/cgi-bin/access.cgi/as/~njas/sequences/eisA.cgi?Anum=#1}{\underline{#1}}}
\begin{document}

\begin{center}
\vskip 1cm{\LARGE\bf On the $r$-shifted central triangles of a Riordan array} \vskip 1cm \large
Paul Barry\\
School of Science\\
Waterford Institute of Technology\\
Ireland\\
\href{mailto:pbarry@wit.ie}{\tt pbarry@wit.ie}
\end{center}
\vskip .2 in

\begin{abstract} Let $A$ be a proper Riordan array with general element $a_{n,k}$. We study the one parameter family of matrices whose general elements are given by $a_{2n+r, n+k+r}$. We show that each such matrix can be factored into a product of a Riordan array and the original Riordan array $A$, thus exhibiting each element of the family as a Riordan array. We find transition relations between the elements of the family, and examples are given. Lagrange inversion is used as a main tool in the proof of these results. \end{abstract}

\section{Preliminaries}

The authors Zheng and Yang \cite{Zheng} have considered the family of $r$-shifted central coefficients $a_{2n+r, n+r}$ of a Riordan array $A$. In this note, we consider the family of matrices $c(A;r)$ with general term  $a_{2n+r, n+k+r}$ that have these $r$-shifted central coefficients as their leftmost column. We shall call these matrices the $r$-shifted central triangles of the Riordan array $A$. By construction, these are lower triangular matrices with $1$'s on the principal diagonal. In the sequel we shall  characterize this family as a one parameter family of Riordan arrays.

We let $A$ denote a Riordan array \cite{SGWW}, defined by
$$A=(g(x), xf(x)),$$ where $g(x)$ and $f(x)$ are power series with integer coefficients, which satisfy
$$g(0)=1,\quad f(0)=1.$$
The general term $a_{n,k}$ of the Riordan array $A$ is then defined by
$$a_{n,k}=[x^n] g(x)(xf(x))^k,$$ where the operator $[x^n]$  extracts the coefficient of $x^n$ \cite{Merlini_MC}.

For a power series $h(x)$ with $h(0)=0$, we call the solution $u$ to the equation
$$h(u)=x,$$ which satisfies $u(0)=0$ the reversion of the power series $h(x)$. We denote this
by $$\bar{h}(x), \quad \textrm{or} \quad \textrm{Rev}(h)(x).$$

We recall that the set of Riordan arrays is a group, where the product of two Riordan arrays is defined by
$$ (g(x), xf(x)) \cdot (u(x), xv(x)) = (g(x)u(xf(x)), xf(x) v(xf(x))),$$ and the inverse of a Riordan array is given by $$ (g(x), xf(x))^{-1}=\left(\frac{1}{g\left(\overline{xf(x)}\right)}, \overline{x f(x)}\right).$$

\noindent We shall need the following version of Lagrange inversion in the sequel \cite{Merlini_When, Stanley_2}.
\begin{theorem} (Lagrange-B\"urmann Inversion). Suppose that a formal power series $w = w(t)$
is implicitly defined by the relation $w = t \phi(w)$, where $\phi(t)$ is a formal power series such that
$\phi(0) \ne 0$. Then, for any formal power series $F(t)$,
$$[t^n]F(w(t))=\frac{1}{n}[t^{n-1}] F'(t)(\phi(t))^n.$$
\end{theorem}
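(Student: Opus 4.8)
\section*{Proof proposal}

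This is the classical Lagrange--B\"urmann formula, and the most transparent self-contained route is through formal residues. The plan is to work in the field $\mathbb{C}((t))$ of formal Laurent series, where for $G(t)=\sum_m g_m t^m$ one sets $\operatorname{res}_t G = g_{-1}$, so that $[t^n]G(t)=\operatorname{res}_t\big(t^{-n-1}G(t)\big)$. Two elementary facts drive the whole argument. First, $\operatorname{res}_t H'(t)=0$ for every Laurent series $H$ (a derivative has no $t^{-1}$ term), which gives the integration-by-parts rule $\operatorname{res}_t(AB')=-\operatorname{res}_t(A'B)$. Second, the change-of-variables rule: if $\psi(t)$ is a power series with $\psi(0)=0$ and $\psi'(0)\neq 0$, then $\operatorname{res}_t H(t)=\operatorname{res}_w\big(H(\psi(w))\,\psi'(w)\big)$ for every Laurent series $H$; by linearity this reduces to $H(t)=t^m$, where it is immediate from the first fact when $m\neq -1$, and when $m=-1$ it follows from $\psi(w)^{-1}\psi'(w)=(\log\psi)'(w)=w^{-1}+(\text{regular})$ since $\psi(w)=w\cdot(\text{unit})$.

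Next I would recast the hypothesis. The relation $w=t\,\phi(w)$ with $\phi(0)\neq 0$ says precisely that $t=\psi(w):=w/\phi(w)$, a power series with $\psi(0)=0$ and $\psi'(0)=1/\phi(0)\neq 0$; hence $w=w(t)$ is the compositional inverse of $\psi$, so $\psi(w(t))=t$ and $w(\psi(w))=w$. For $n\ge 1$ write
$$[t^n]F(w(t))=\operatorname{res}_t\big(t^{-n-1}F(w(t))\big),$$
and apply the change-of-variables rule via $t=\psi(w)$. Since $w(\psi(w))=w$, $\psi(w)^{-n-1}=w^{-n-1}\phi(w)^{n+1}$, and $\psi'(w)=\phi(w)^{-1}-w\phi'(w)\phi(w)^{-2}$, the quantity $\operatorname{res}_t\big(t^{-n-1}F(w(t))\big)$ equals $\operatorname{res}_w$ of
$$w^{-n-1}F(w)\phi(w)^{n}\;-\;w^{-n}F(w)\phi(w)^{n-1}\phi'(w).$$
For the second summand, use $\phi^{n-1}\phi'=\tfrac1n(\phi^n)'$ together with integration by parts ($A=w^{-n}F(w)$, $B=\phi(w)^n$, so $A'=-nw^{-n-1}F+w^{-n}F'$), which turns $\operatorname{res}_w\big(w^{-n}F\phi^{n-1}\phi'\big)$ into $\operatorname{res}_w\big(w^{-n-1}F\phi^n\big)-\tfrac1n\operatorname{res}_w\big(w^{-n}F'\phi^n\big)$. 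Substituting back, the two copies of $\operatorname{res}_w\big(w^{-n-1}F\phi^n\big)$ cancel and one is left with $\tfrac1n\operatorname{res}_w\big(w^{-n}F'(w)\phi(w)^n\big)=\tfrac1n[w^{n-1}]F'(w)\phi(w)^n$, which is the asserted identity.

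The only genuinely delicate point is the bookkeeping behind the two residue lemmas: one must check that every expression that appears (for instance $t^{-n-1}F(w(t))$ and $w^{-n}F\phi^{n-1}\phi'$) is a \emph{bona fide} formal Laurent series with a pole of finite order, so that $\operatorname{res}$ is defined and the substitution is legitimate -- and this is exactly where the hypotheses $\phi(0)\neq 0$ (equivalently $w(0)=0$, $w'(0)\neq 0$) are used. Everything else is routine formal manipulation. As an alternative, one may first reduce to $F(t)=t^k$ by linearity, so that the statement becomes the familiar $[t^n]w(t)^k=\tfrac{k}{n}[t^{n-k}]\phi(t)^n$, which can be proved by the same residue computation or by induction on $k$.
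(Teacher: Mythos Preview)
Your argument via formal residues is correct and is essentially the standard proof (as in Stanley's \emph{Enumerative Combinatorics}, Vol.~2, or Goulden--Jackson). The two lemmas you isolate---that $\operatorname{res}_t H'=0$ and the change-of-variables rule $\operatorname{res}_t H=\operatorname{res}_w H(\psi(w))\psi'(w)$---are exactly what is needed, and the subsequent integration-by-parts computation is carried out accurately; the cancellation of the $\operatorname{res}_w\bigl(w^{-n-1}F\phi^{n}\bigr)$ terms is the heart of the matter and you have it right.

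The paper, however, does not prove this theorem at all: it is quoted as a classical result with references to Merlini--Sprugnoli--Verri and Stanley, and then used as a black box in the proofs of the later theorems. So there is nothing to compare your approach against within the paper itself; you have supplied a self-contained proof where the paper simply cites one. If anything, your write-up is more than what the paper requires---for the applications that follow, only the consequence $[x^n]F(\mathrm{Rev}(v))=\tfrac{1}{n}[x^{n-1}]F'(x)(x/v)^n$ is invoked, and always in the benign situation where $F'$ is a polynomial times a fixed power series.
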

\noindent A consequence of this is that if $v(x)=\sum_{n\ge 0}v_n x^n$ is a power series with $v_0=0$, $v_1 \ne 0$, we have
$$[x^n] F(Rev(v))=\frac{1}{n} [x^{n-1}] F'(x) \left(\frac{x}{v}\right)^n.$$

An important feature of Riordan arrays is that they have a number of sequence characterizations \cite{Cheon, He}. The simplest of
these
is as follows.
\begin{proposition} \label{Char} \cite[Theorem 2.1, Theorem 2.2]{He} Let $D=[d_{n,k}]$ be an infinite triangular matrix. Then $D$ is a Riordan array if and only if there
exist two sequences $A=[a_0,a_1,a_2,\ldots]$ and $Z=[z_0,z_1,z_2,\ldots]$ with $a_0 \neq 0$, $z_0 \neq 0$ such that
\begin{itemize}
\item $d_{n+1,k+1}=\sum_{j=0}^{\infty} a_j d_{n,k+j}, \quad (k,n=0,1,\ldots)$
\item $d_{n+1,0}=\sum_{j=0}^{\infty} z_j d_{n,j}, \quad (n=0,1,\ldots)$.
\end{itemize}
\end{proposition}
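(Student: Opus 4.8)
The plan is to convert both displayed recurrences into functional equations among the column generating functions of $D$, and then to use the Lagrange--B\"urmann theorem above in each direction. First I would fix notation: assume the normalization $d_{0,0}=1$ that is in force throughout this paper, write $\ell(x)=xf(x)$ when $D=(g,xf)$, let $D_k(x)=\sum_{n\ge0}d_{n,k}x^n$ be the generating function of the $k$-th column, and set $A(x)=\sum_{j\ge0}a_jx^j$, $Z(x)=\sum_{j\ge0}z_jx^j$. Using $[x^n]\tfrac{h(x)}{x}=[x^{n+1}]h(x)$ together with lower-triangularity (so $d_{0,k}=0$ for $k\ge1$), the first bullet is equivalent to
\[
\frac{D_{k+1}(x)}{x}=\sum_{j\ge0}a_jD_{k+j}(x)\qquad(k\ge0),
\]
and the second to $\tfrac{g(x)-1}{x}=\sum_{j\ge0}z_jD_j(x)$. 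For a matrix of Riordan type one has $D_k=g\,\ell^k$, and cancelling the nonzero factor $g\,\ell^k$ collapses the whole first family of identities to the \emph{single} equation $\ell(x)=x\,A(\ell(x))$, while the second becomes $g(x)-1=x\,g(x)\,Z(\ell(x))$, i.e. $g(x)=\bigl(1-x\,Z(\ell(x))\bigr)^{-1}$. This dictionary is the crux of the proof.

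For the forward implication I would argue as follows. If $D=(g,xf)$ is a proper Riordan array then $f(0)\ne0$, so $\ell=xf$ has a compositional inverse $\bar\ell$, and by the dictionary it suffices to exhibit power series solving the two functional equations. Take $A(x):=x/\bar\ell(x)$ and $Z(x):=\bar\ell(x)^{-1}\bigl(1-g(\bar\ell(x))^{-1}\bigr)$; both are genuine power series, the bracketed factor in $Z$ vanishing at $x=0$ because $g(0)=1$, which legitimizes the division by $\bar\ell(x)$. Substituting $y=\ell(x)$ gives $A(\ell(x))=\ell(x)/x$ and $Z(\ell(x))=x^{-1}(1-g(x)^{-1})$, which are exactly the two equations; hence the two recurrences hold. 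One records $a_0=A(0)=1\ne0$ and $z_0=Z(0)=[x^1]g=d_{1,0}$, so the hypothesis $z_0\ne0$ is precisely the properness condition $d_{1,0}\ne0$.

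For the converse, suppose we are given sequences $A,Z$ with $a_0\ne0$, $z_0\ne0$ obeying the two recurrences. I would apply the Lagrange--B\"urmann theorem to $w=t\,\phi(w)$ with $\phi=A$: since $A(0)=a_0\ne0$ there is a unique power series $\ell(x)$ with $\ell(0)=0$ satisfying $\ell(x)=x\,A(\ell(x))$, and $\ell'(0)=a_0\ne0$, so $\ell$ is invertible. Then set $g(x):=\bigl(1-x\,Z(\ell(x))\bigr)^{-1}$, a well-defined power series with $g(0)=1$ because $x\,Z(\ell(x))$ has zero constant term, and let $D'=(g,\ell)$ be the corresponding Riordan array. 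By construction $\ell$ and $g$ satisfy the two functional equations, so by the dictionary $D'$ satisfies both bullets with exactly the given $A$ and $Z$. It remains to check that the two recurrences together with the value $d_{0,0}=1$ determine $D$ uniquely, whence $D=D'$: this is a one-line induction on the row index, since lower-triangularity makes each sum finite and row $n+1$ of any matrix meeting the two conditions is forced by row $n$ — column $0$ by the second bullet, columns $\ge1$ by the first.

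The step I expect to be the real obstacle is the reformulation in the first paragraph: recognizing that the ``horizontal'' recurrence $d_{n+1,k+1}=\sum_j a_j d_{n,k+j}$ is, once transported to column generating functions, nothing but the fixed-point equation $\ell=x\,A(\ell)$, and that the $Z$-recurrence similarly pins down $g$. After that dictionary is in place the forward direction is a substitution and the converse is a single application of Lagrange--B\"urmann followed by a routine uniqueness induction; the only point demanding care is the constant-term bookkeeping that places $a_0$, $z_0$, and $g(0)$ where the statement needs them, and in particular makes $a_0\ne0$ the hypothesis that licenses the inversion defining $\ell$.
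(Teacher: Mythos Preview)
The paper does not supply its own proof of this proposition: it is quoted verbatim from He--Sprugnoli \cite{He} and followed only by the explicit formulas \eqref{AZ_eq} for $A(x)$ and $Z(x)$, with no argument. So there is nothing in the paper to compare your proposal against.

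Judged on its own terms, your argument is correct and is in fact the standard one. The translation of the two recurrences into the functional equations $\ell(x)=x\,A(\ell(x))$ and $g(x)=\bigl(1-x\,Z(\ell(x))\bigr)^{-1}$ via column generating functions is exactly the ``dictionary'' that drives the proof, and once it is in place both directions are routine, as you say. Your use of Lagrange--B\"urmann in the converse is slightly heavier than necessary---all you need is the existence and uniqueness of the fixed point $\ell=xA(\ell)$ for $A(0)\ne0$, which is elementary---but it is certainly valid, and it has the virtue of tying the argument to Theorem~1 of the present paper. The uniqueness induction at the end is the right way to finish.

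One point worth flagging, which you already noticed: the hypothesis $z_0\ne0$ in the stated proposition is equivalent, via your computation $z_0=[x]g(x)=d_{1,0}$, to the extra condition $d_{1,0}\ne0$. Under the paper's standing hypotheses $g(0)=f(0)=1$ this need not hold (e.g.\ $g(x)=1$), so strictly speaking the forward implication produces a $Z$-sequence with $z_0=d_{1,0}$ that may vanish. This is a known wrinkle in the literature rather than a defect in your argument; you handled it correctly by isolating $z_0\ne0$ as a separate ``properness'' assumption.
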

The coefficients $a_0,a_1,a_2,\ldots$ and $z_0,z_1,z_2,\ldots$ are called the $A$-sequence and the $Z$-sequence of the Riordan array
$D=(g(x),f(x))$, respectively.
Letting $A(x)$ be the generating function of the $A$-sequence and $Z(x)$ be the generating function of the $Z$-sequence, we have
\begin{equation}\label{AZ_eq} A(x)=\frac{x}{\bar{f}(x)}, \quad Z(x)=\frac{1}{\bar{f}(x)}\left(1-\frac{1}{g(\bar{f}(x))}\right).\end{equation}
Information about this sequence characterization is captured in the production matrix of a Riordan array.
In general, for an invertible matrix $A$, its production matrix is defined to be the matrix
$$P=A^{-1}\cdot \bar{A},$$ where $\bar{A}$ is the matrix $A$ with its top row removed. The matrix $P$ will be a (lower) Hessenberg matrix. In the other direction, if we are given a lower Hessenberg matrix $P$, we can use it to generate a lower triangular matrix as follows. Letting
$\mathbf{r}_0$
be the row vector
$$\mathbf{r}_0=(1,0,0,0,\ldots),$$ we define $\mathbf{r}_i=\mathbf{r}_{i-1}P$, $i \ge 1$.
Stacking these rows leads to another infinite matrix which we
denote by
$A_P$. Then $P$ is said to be the \emph{production matrix} for
$A_P$. We have the following result.
 \begin{proposition} \label{RProdMat}
\cite[Proposition 3.1]{ProdMat}\label{AZ} Let $P$ be
an infinite production matrix and let $A_P$ be the matrix
induced by $P$. Then $A_P$ is an (ordinary) Riordan matrix if
and only if $P$ is
of the form \begin{displaymath} P=\left(\begin{array}{ccccccc}
\xi_0 & \alpha_0 & 0 & 0 & 0 & 0 & \ldots \\\xi_1 & \alpha_1 &
\alpha_0 & 0 &
0 & 0 & \ldots \\ \xi_2 & \alpha_2 & \alpha_1 & \alpha_0 & 0 &
0 & \ldots \\ \xi_3 & \alpha_3 & \alpha_2 & \alpha_1 &
\alpha_0
& 0 & \ldots
\\ \xi_4 & \alpha_4 & \alpha_3 & \alpha_2 & \alpha_1 &
\alpha_0
& \ldots \\\xi_5 & \alpha_5  & \alpha_4 & \alpha_3 & \alpha_2
&
\alpha_1
&\ldots\\ \vdots & \vdots & \vdots & \vdots & \vdots & \vdots
&
\ddots\end{array}\right)\end{displaymath} where $\xi_0 \neq 0$, $\alpha_0 \neq 0$. Moreover, columns $0$
and $1$ of
the matrix $P$ are the $Z$- and $A$-sequences,
respectively, of the Riordan array $A_P$. \end{proposition}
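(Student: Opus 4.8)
Here is how I would prove the stated production-matrix characterization.

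The plan is to prove both implications by routing through the $A$- and $Z$-sequence characterization of Proposition~\ref{Char}, using the single observation that, read one row at a time, the identity $A_P \cdot P = \overline{A_P}$ is exactly the pair of sequence recursions written in matrix form. First note that the defining recursion $\mathbf{r}_{n+1} = \mathbf{r}_n P$ involves only finite sums: $\mathbf{r}_0 = (1,0,0,\ldots)$ is supported on $\{0\}$, and if $\mathbf{r}_n$ is supported on $\{0,1,\ldots,n\}$ then, since $P$ is lower Hessenberg, $\mathbf{r}_{n+1} = \mathbf{r}_n P$ is supported on $\{0,1,\ldots,n+1\}$. Hence $A_P$ is lower triangular, and by construction $A_P \cdot P = \overline{A_P}$, the rows of $\overline{A_P}$ being $\mathbf{r}_1,\mathbf{r}_2,\ldots$.

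For the ``if'' direction, assume $P$ has the displayed shape with $\xi_0 \ne 0$ and $\alpha_0 \ne 0$. Writing $\mathbf{r}_{n+1} = \mathbf{r}_n P$ coordinatewise and reading off column $0$ of $P$ gives $d_{n+1,0} = \sum_{j\ge 0} \xi_j d_{n,j}$, while reading off column $k+1$ (whose entry in row $j$ is $\alpha_{j-k}$ for $j \ge k$ and $0$ otherwise) gives $d_{n+1,k+1} = \sum_{i \ge 0} \alpha_i d_{n,k+i}$. Taking $k = n$ in the latter yields $d_{n+1,n+1} = \alpha_0 d_{n,n}$, so $d_{n,n} = \alpha_0^{\,n} \ne 0$ and $A_P$ is an invertible triangular matrix. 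These two relations are precisely the $Z$- and $A$-sequence conditions of Proposition~\ref{Char} for the sequences $Z = [\xi_0,\xi_1,\ldots]$ and $A = [\alpha_0,\alpha_1,\ldots]$, and since $\xi_0 \ne 0$, $\alpha_0 \ne 0$, that proposition yields that $A_P$ is a Riordan array with these $A$- and $Z$-sequences; this also settles the ``moreover'' clause.

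For the ``only if'' direction, assume $A_P$ is a Riordan array. Then $A_P$ is lower triangular with $d_{0,0} = 1$ and nonzero diagonal entries, hence invertible, so $P = A_P^{-1} \cdot \overline{A_P}$ is the unique matrix satisfying $A_P \cdot P = \overline{A_P}$. By Proposition~\ref{Char}, $A_P$ possesses an $A$-sequence $[a_0,a_1,\ldots]$ and a $Z$-sequence $[z_0,z_1,\ldots]$ with $a_0 \ne 0$, $z_0 \ne 0$. Let $Q$ be the matrix of the displayed shape determined by $Q_{j,0} = z_j$ and $Q_{j,k+1} = a_{j-k}$ for $j \ge k$ (and $Q_{j,m} = 0$ for $m > j+1$). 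Reversing the computation of the previous paragraph, the $A$- and $Z$-sequence relations give $\mathbf{r}_n Q = \mathbf{r}_{n+1}$ for all $n$, i.e.\ $A_P \cdot Q = \overline{A_P} = A_P \cdot P$; since $A_P$ is invertible, $P = Q$, which has exactly the asserted form, with $\xi_0 = z_0 \ne 0$, $\alpha_0 = a_0 \ne 0$, and columns $0$ and $1$ equal to the $Z$- and $A$-sequences of $A_P$.

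The one step that needs care is the uniqueness argument in the ``only if'' direction: to pass from $A_P \cdot Q = A_P \cdot P$ to $P = Q$ one genuinely needs the invertibility of $A_P$, which is why it matters that a Riordan array is triangular with nonzero main diagonal; the cancellation is legitimate in this setting because every entry of $A_P \cdot P$ is a finite sum. Beyond that, the argument is a direct dictionary between the matrix identity $A_P \cdot P = \overline{A_P}$ and the sequence recursions of Proposition~\ref{Char}.
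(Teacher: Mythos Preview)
Your argument is correct: you faithfully translate the row recursion $\mathbf{r}_{n+1}=\mathbf{r}_n P$ into the $A$/$Z$ recurrences of Proposition~\ref{Char} and back, and the invertibility of $A_P$ (nonzero diagonal) justifies the uniqueness step in the ``only if'' direction. One small point: with the paper's normalization $g(0)=f(0)=1$ one actually gets $\alpha_0=1$, not merely $\alpha_0\ne 0$, but this does not affect the logic of your proof.

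As for comparison, the paper does not supply its own proof of this proposition; it is quoted verbatim from \cite[Proposition~3.1]{ProdMat} and used as background. Your write-up therefore goes beyond what the present paper offers, and it is essentially the standard argument one finds in the production-matrix literature: reduce to the $A$/$Z$ characterization and observe that the Hessenberg structure of $P$ with Toeplitz columns from column~$1$ onward is exactly the matrix encoding of those two recurrences.
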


Many interesting examples of sequences and Riordan arrays can be found in Neil Sloane's On-Line
Encyclopedia of Integer Sequences (OEIS), \cite{SL1, SL2}. Sequences are frequently referred to by their
OEIS number. For instance, the binomial matrix $\mathbf{B}=\left(\frac{1}{1-x}, \frac{x}{1-x}\right)$ (``Pascal's triangle'') is \seqnum{A007318}.

\section{Initial results}
The central coefficients of the Riordan array $A$ are the elements $a_{2n,n}$. We shall call the lower-triangular matrix $c(A;1)$ with general element $a_{2n,n+k}$ the central triangle of the Riordan array $A$.
\begin{example} We consider the Riordan array $\left(\frac{1}{1-x}, \frac{x(1+x)}{1-x}\right)$. This triangle begins
\begin{displaymath}A=\left(\begin{array}{ccccccc} \mathbf{1} & 0 & 0 & 0
&0 & 0 & \cdots \\1 & 1 & 0 & 0 & 0 & 0 & \cdots \\ 1 & \mathbf{3} & \mathbf{1} &
0 & 0 & 0 &
\cdots \\ 1 & 5 & 5 & 1 & 0 & 0 & \cdots \\ 1 & 7 & \mathbf{13} & \mathbf{7} &
\mathbf{1} & 0 & \cdots \\1 & 9  & 25 & 25 & 9 & 1 &\cdots\\ \vdots
& \vdots &
\vdots & \vdots & \vdots & \vdots &
\ddots\end{array}\right),\end{displaymath} with central elements that begin
$$1,3,13,63,321,1683,\ldots.$$
The associated shifted central triangle begins
\begin{displaymath}c(A;1)=\left(\begin{array}{ccccccc} 1 & 0 & 0 & 0
&0 & 0 & \cdots \\3 & 1 & 0 & 0 & 0 & 0 & \cdots \\ 13 & 7 & 1 &
0 & 0 & 0 &
\cdots \\ 63 & 41 & 11 & 1 & 0 & 0 & \cdots \\ 321 & 231 & 85 & 15 &
1 & 0 & \cdots \\1683 & 1289  & 575 & 145 & 19 & 1 &\cdots\\ \vdots
& \vdots &
\vdots & \vdots & \vdots & \vdots &
\ddots\end{array}\right).\end{displaymath}
\end{example}

Then we have the following theorem.
\begin{theorem} The shifted central triangle $c(A;1)$ of the Riordan array $A=(g(x), xf(x))$ is a Riordan array which admits the following factorization.
$$ c(A;1)=\left(\frac{\phi'}{f(\phi)}, \phi\right)\cdot A,$$
where
$$\phi(x)=\textrm{Rev}\left(\frac{x}{f(x)}\right) \quad \textrm{and} \quad \phi'(x)=\frac{d}{dx}\phi(x).$$
\end{theorem}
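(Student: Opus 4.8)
The plan is to establish the claimed factorization entry by entry; the assertion that $c(A;1)$ is a Riordan array then comes for free, since the Riordan arrays form a group under the product recalled in the preliminaries, so any product of the Riordan array $A$ with another Riordan array is again a Riordan array. That the proposed left factor $\left(\frac{\phi'}{f(\phi)},\phi\right)$ really is a Riordan array is immediate from the defining relation for $\phi$: the equality $\phi=\mathrm{Rev}\!\left(\frac{x}{f(x)}\right)$ is the same as $\phi/f(\phi)=x$, i.e.\ $\phi=x\,f(\phi)$, which forces $\phi(0)=0$ and $\phi'(0)=f(0)=1$, so that $\phi(x)=x+O(x^2)$ and $\phi'(x)/f(\phi(x))$ is a power series with constant term $1$.

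First I would expand the right-hand side. Writing the left factor as $(g_1,xf_1)$ with $g_1=\phi'/f(\phi)$ and $xf_1=\phi$, the multiplication rule gives at once
$$\left(\frac{\phi'}{f(\phi)},\phi\right)\cdot A=\left(\frac{\phi'\,g(\phi)}{f(\phi)},\;\phi\,f(\phi)\right),$$
whose $(n,k)$ entry is $[x^n]\frac{\phi'\,g(\phi)}{f(\phi)}\bigl(\phi\,f(\phi)\bigr)^k$. Using $f(\phi)=\phi/x$ (the relation $\phi=xf(\phi)$ again) this becomes $[x^{n+k-1}]\,\phi'\,g(\phi)\,\phi^{2k-1}$. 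On the other side, the $(n,k)$ entry of $c(A;1)$ is $a_{2n,n+k}=[x^{2n}]g(x)(xf(x))^{n+k}=[x^{n-k}]g(x)f(x)^{n+k}$. So the theorem reduces to proving, for $0\le k\le n$,
$$[x^{n+k-1}]\,\phi'(x)\,g(\phi(x))\,\phi(x)^{2k-1}=[x^{n-k}]\,g(x)\,f(x)^{n+k}.$$

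The second step is Lagrange--B\"urmann inversion, and the trick is to recognise the left-hand integrand as an exact derivative. For $k\ge1$, set $G_k(t)=\int_0^t g(s)\,s^{2k-1}\,ds$; this is a genuine power series with $G_k(0)=0$ and $G_k'(t)=g(t)t^{2k-1}$, and $\phi'\,g(\phi)\,\phi^{2k-1}=\frac{d}{dx}G_k(\phi)$, so the left side equals $(n+k)[x^{n+k}]G_k(\phi)$. Since $\phi=\mathrm{Rev}(v)$ with $v(x)=x/f(x)$, so that $x/v(x)=f(x)$, the consequence of Lagrange--B\"urmann stated in the preliminaries gives $[x^{n+k}]G_k(\phi)=\frac1{n+k}[x^{n+k-1}]g(x)\,x^{2k-1}\,f(x)^{n+k}$; multiplying by $n+k$ and absorbing $x^{2k-1}$ into the coefficient extraction yields exactly $[x^{n-k}]g(x)f(x)^{n+k}$. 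The diagonal case $k=0$ needs a small separate argument, since then $x^{2k-1}=x^{-1}$ is not a power series: differentiating $\phi=xf(\phi)$ gives $x\phi'/\phi=1/(1-xf'(\phi))$, so the claim becomes $[x^n]\frac{g(\phi)}{1-xf'(\phi)}=[x^n]g(x)f(x)^n$, which is the ``diagonal'' form of Lagrange--B\"urmann, derivable from the stated version by applying it to an antiderivative; in fact this diagonal form, applied with $F(t)=g(t)t^{2k}$ to $[x^{n+k}]\frac{g(\phi)\phi^{2k}}{1-xf'(\phi)}$, dispatches every $k\ge0$ at once.

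The only real obstacle is the bookkeeping in the first step --- keeping the powers of $x$ and of $\phi$ aligned once $f(\phi)$ is replaced by $\phi/x$ --- together with the easy but essential observation that $\phi'\,g(\phi)\,\phi^{2k-1}$ is a perfect derivative, which is precisely what lets the factor $\frac1n$ in Lagrange--B\"urmann be cancelled. Once the theorem has been reduced to the coefficient identity of Step 1, the inversion itself is a one-line computation.
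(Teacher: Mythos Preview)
Your argument is correct, but it is organized differently from the paper's. The paper never multiplies the product out; instead it starts from $a_{2n,n+k}=[x^n]g(x)(xf(x))^k f(x)^n$, expands this as the convolution $\sum_{i} a_{i,k}\,[x^{n-i}]f(x)^n$, and then applies Lagrange--B\"urmann with $F'(t)=t^i/f(t)$ (which, unlike your $G_k'$, is already a power series for every $i\ge 0$) to identify $[x^{n-i}]f(x)^n$ with $[x^n]\frac{\phi'}{f(\phi)}\phi^i$, the $(n,i)$ entry of $\bigl(\frac{\phi'}{f(\phi)},\phi\bigr)$. This reveals the left factor directly and handles all $k$ uniformly, without a separate diagonal case. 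Your route---first evaluating the product as $\bigl(\phi' g(\phi)/f(\phi),\,\phi f(\phi)\bigr)$ (which the paper records only afterwards, as a corollary), then reducing via $f(\phi)=\phi/x$ to a single coefficient identity and invoking Lagrange through the antiderivative $G_k$ or the ``second form'' $[x^m]\frac{H(\phi)}{1-xf'(\phi)}=[x^m]H(x)f(x)^m$---is equally valid and perhaps more direct once the factorization is already in hand, at the cost of the small $k=0$ detour (which your final remark about the second form indeed removes).
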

\begin{proof}
The matrix $c(A;1)$ is a Riordan array since we exhibit it as the product of two Riordan arrays. In order to show that it is the product of two Riordan arrays, we proceed as follows, using Lagrange inversion.
\begin{eqnarray*}
a_{2n,n+k}&=& [x^{2n}] g(x)(xf(x))^{n+k}\\
&=&[x^{2n}] x^{n+k}g(x)f(x)^k f(x)^n\\
&=&[x^n] g(x)(xf(x))^k f(x)^n\\
&=&\sum_{i=0}^n [x^i] g(x)(xf(x))^k [x^{n-i}]f(x)^n\\
&=& \sum_{i=0}^n a_{i,k} [x^n] \frac{x^i}{f(x)} f(x)^{n+1}\\
&=& \sum_{i=0}^n a_{i,k} (n+1) \frac{1}{n+1} [x^n] F'(x) f(x)^{n+1} \quad (F'(x)=\frac{x^i}{f(x)})\\
&=& \sum_{i=0}^n a_{i,k} (n+1) [x^{n+1}] F\left(\textrm{Rev}\left(\frac{x}{f(x)}\right)\right)\\
&=& \sum_{i=0}^n a_{i,k} [x^n] F'\left(\textrm{Rev}\left(\frac{x}{f(x)}\right)\right)\frac{d}{dx} \textrm{Rev}\left(\frac{x}{f(x)}\right)\\
&=& \sum_{i=0}^n a_{i,k} [x^n] \frac{\left(\textrm{Rev}\left(\frac{x}{f(x)}\right)\right)^i}{f\left(\textrm{Rev}\left(\frac{x}{f(x)}\right)\right)} \frac{d}{dx} \textrm{Rev}\left(\frac{x}{f(x)}\right)\\
&=& \sum_{i=0}^n a_{i,k} [x^n] \frac{\phi'(x)}{f(\phi(x))} (\phi(x))^i\\
&=& \sum_{i=0}^n m_{n,i} a_{i,k}, \end{eqnarray*}
where
$$m_{n,k}=[x^n] \frac{\phi'(x)}{f(\phi(x))}(\phi(x))^k$$ is the general term of the Riordan array
$$\left(\frac{\phi'}{f(\phi)}, \phi\right).$$
\end{proof}
\begin{corollary} Let the generating function of the $A$-sequence of the Riordan array $A$ be $A(x)$. Then the generating function of the $A$-sequence of $c(A;1)$ is $A(x)^2$.
\end{corollary}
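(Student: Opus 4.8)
The plan is to bypass the explicit factorization of $c(A;1)$ and argue directly from the sequence characterization of Proposition~\ref{Char}. The preceding theorem already tells us that $c(A;1)$ is a Riordan array, so it has a genuine $A$-sequence; moreover that sequence is the \emph{unique} sequence $(\tilde a_m)_{m\ge 0}$ for which $c_{n+1,k+1}=\sum_{m\ge 0}\tilde a_m\,c_{n,k+m}$ holds for all $n,k\ge 0$, uniqueness being immediate from the $k=0$ instance $c_{n+1,1}=\sum_{m=0}^{n}\tilde a_m\,c_{n,m}$, which determines $\tilde a_n$ from $\tilde a_0,\dots,\tilde a_{n-1}$ using $c_{n,n}=1$. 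Hence it suffices to exhibit \emph{one} sequence that satisfies this recurrence for $c(A;1)$ and then identify its generating function.

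Write $c_{n,k}=a_{2n,n+k}$ and let $(a_j)$ be the $A$-sequence of $A$, so that $a_{N+1,K+1}=\sum_{j\ge 0}a_j\,a_{N,K+j}$ for all $N,K\ge 0$. The key step is to apply this relation to $a_{2n+2,n+k+2}$ twice, peeling off one row each time: first with $(N,K)=(2n+1,n+k+1)$,
$$a_{2n+2,\,n+k+2}=\sum_{j\ge 0}a_j\,a_{2n+1,\,n+k+1+j},$$
and then applying it again to each summand with $(N,K)=(2n,\,n+k+j)$,
$$a_{2n+1,\,n+k+1+j}=\sum_{i\ge 0}a_i\,a_{2n,\,n+k+i+j}.$$
Substituting and collecting terms according to $m=i+j$ yields
$$c_{n+1,k+1}=a_{2n+2,\,n+k+2}=\sum_{m\ge 0}\Big(\sum_{i+j=m}a_ia_j\Big)a_{2n,\,n+k+m}=\sum_{m\ge 0}b_m\,c_{n,k+m},$$
where $b_m=\sum_{i+j=m}a_ia_j$; all the sums are finite by lower-triangularity, so there is nothing to justify analytically. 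By the uniqueness noted above, $(b_m)$ \emph{is} the $A$-sequence of $c(A;1)$, and since $(b_m)$ is the Cauchy product of $(a_j)$ with itself, its generating function is $A(x)^2$, as claimed.

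I do not expect a serious obstacle; the only points needing care are the index bookkeeping in the two successive uses of the $A$-sequence relation and the remark that the $A$-sequence is pinned down uniquely by the recurrence, so that the $(b_m)$ we produce really is the $A$-sequence and not merely some sequence satisfying it. For completeness one could instead derive the result from the factorization, using that the second component of $c(A;1)$ equals $\phi(x)f(\phi(x))=x\,f(\phi(x))^2$ (since $\phi(x)=x f(\phi(x))$) together with \eqref{AZ_eq}; but this computation with reversions is heavier, and the argument above is both shorter and more transparent.
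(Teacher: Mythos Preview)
Your proof is correct and takes a genuinely different route from the paper's. The paper argues via the factorization $c(A;1)=\bigl(\phi'/f(\phi),\phi\bigr)\cdot A$ together with the known formula for the $A$-sequence of a product of Riordan arrays, namely that the product has $A$-sequence $A(x)\,A_1\bigl(x/A(x)\bigr)$; it then computes $A_1(x)=f(x)$ and simplifies $A(x)\,f\bigl(\overline{xf}\bigr)$ to $A(x)^2$ through a short chain of manipulations with reversions. Your argument instead sidesteps both the factorization and the product formula: since passing from $c_{n,k}$ to $c_{n+1,k+1}$ moves two rows down in $A$, two successive applications of the $A$-sequence recurrence of $A$ produce the Cauchy square directly, and uniqueness of the $A$-sequence (which you justify cleanly from the diagonal of $1$'s) finishes the proof. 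Your approach is more elementary and self-contained, and it makes immediately transparent why the result is independent of $r$ (the shift $r$ never enters the double recurrence), a fact the paper only records later for $c(A;r)$ by appealing back to the factorization. The paper's approach, by contrast, ties the corollary into the structural picture that is the paper's main theme and exercises the product formula for $A$-sequences. Your closing remark about the alternative via $\phi f(\phi)=x\,f(\phi)^2$ is also accurate; it leads to the same reversion computation the paper carries out.
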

\begin{proof} Let the generating function of the $A$-sequence of the Riordan array $\left(\frac{\phi'}{f(\phi)}, \phi\right)$ be
$A_1(x)$ and let the generating function of the $A$-sequence of $A$ be $A(x)$. Then the generating function of the $A$-sequence of $c(A;1)$ is given by \cite{AZ}
$$ A(x) A_1\left(\frac{x}{A(x)}\right).$$
Now in our case we have
$$A_1(x)=\frac{x}{\textrm{Rev}(\phi)}=\frac{x}{\frac{x}{f(x)}}=f(x),$$ and
$$A(x)=\frac{x}{\textrm{Rev}(xf)}.$$
Thus the generating function of the $A$-sequence of $c(A;1)$ is given by
\begin{eqnarray*}
 A(x) A_1\left(\frac{x}{A(x)}\right)&=&\frac{x}{\textrm{Rev}(xf)}\cdot f\left(\frac{x}{\frac{x}{\textrm{Rev}(xf)}}\right)\\
 &=& \frac{x}{\textrm{Rev}(xf)} \cdot f(\textrm{Rev}(xf))\\
 &=& \frac{x}{\textrm{Rev}(xf)} \cdot \frac{\textrm{Rev}(xf) f(\textrm{Rev}(xf))}{\textrm{Rev}(xf)}\\
 &=& \frac{x}{\textrm{Rev}(xf)} \cdot \frac{(xf)(\textrm{Rev}(xf))}{\textrm{Rev}(xf)}\\
 &=& \frac{x}{\textrm{Rev}(xf)} \cdot\frac{x}{\textrm{Rev}(xf)}\\
 &=& A(x)^2.\end{eqnarray*}
 \end{proof}
\begin{corollary} We have
$$c(A;1)=\left(\phi' \frac{g(\phi)}{f(\phi)}, \phi f(\phi)\right).$$
\end{corollary}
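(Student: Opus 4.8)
The plan is to evaluate directly the Riordan-array product furnished by the preceding theorem. Recall that we have established $c(A;1)=\left(\frac{\phi'}{f(\phi)},\phi\right)\cdot A$, where $A=(g(x),xf(x))$ and $\phi(x)=\textrm{Rev}\!\left(\frac{x}{f(x)}\right)$. Since $\phi(0)=0$ and $\phi'(0)=\frac{1}{f(0)}=1\ne 0$, we may write $\phi(x)=x\,\tilde f(x)$ with $\tilde f(0)\ne 0$, so that $\left(\frac{\phi'}{f(\phi)},\phi\right)$ is a genuine (proper) Riordan array, presented in the form $(g_1(x),xf_1(x))$ with $g_1(x)=\frac{\phi'(x)}{f(\phi(x))}$ and $f_1(x)=\frac{\phi(x)}{x}$, noting also that $g_1(0)=\frac{\phi'(0)}{f(\phi(0))}=\frac{1}{f(0)}=1$.

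Next I would apply the group multiplication rule recalled in the Preliminaries, namely $(g_1(x),xf_1(x))\cdot(u(x),xv(x))=\bigl(g_1(x)\,u(xf_1(x)),\;xf_1(x)\,v(xf_1(x))\bigr)$, with $u(x)=g(x)$ and $v(x)=f(x)$ taken from $A$, and with $xf_1(x)=\phi(x)$. The first coordinate then becomes $\frac{\phi'(x)}{f(\phi(x))}\,g(\phi(x))=\phi'(x)\,\frac{g(\phi(x))}{f(\phi(x))}$, and the second coordinate becomes $\phi(x)\,f(\phi(x))$. Combining these gives $c(A;1)=\left(\phi'\,\frac{g(\phi)}{f(\phi)},\,\phi f(\phi)\right)$, which is precisely the asserted identity.

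There is essentially no obstacle to overcome: the only point requiring care is to recognise that the substitution dictated by the product formula is $x\mapsto xf_1(x)=\phi(x)$, and that $\phi$ is a legitimate second component, which is exactly the verification $\phi(0)=0$, $\phi'(0)=1\ne 0$ noted above. As an alternative (and as a consistency check) one could instead compare general terms: expanding $[x^n]\,\phi'\,\frac{g(\phi)}{f(\phi)}\,\bigl(\phi f(\phi)\bigr)^k=[x^n]\,\frac{\phi'}{f(\phi)}\,g(\phi)\,\phi^k f(\phi)^k$ and matching it, via the Riordan-array description of $\left(\frac{\phi'}{f(\phi)},\phi\right)$, with the penultimate display in the proof of the theorem; but the one-line product computation above is the cleaner route, so that is the one I would present.
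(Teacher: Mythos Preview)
Your proof is correct and follows essentially the same route as the paper: both simply evaluate the Riordan product $\left(\frac{\phi'}{f(\phi)},\phi\right)\cdot(g,xf)$ using the group multiplication rule, obtaining $\left(\frac{\phi'}{f(\phi)}g(\phi),\,\phi f(\phi)\right)$. Your additional remarks on the well-definedness of $\phi$ as a second component are fine (though note that strictly $\phi'(0)=f(0)$ rather than $1/f(0)$; since $f(0)=1$ this is immaterial).
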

\begin{proof}
From the above theorem, we have
\begin{eqnarray*}
c(A;1)&=&\left(\frac{\phi'}{f(\phi)}, \phi\right)\cdot (g(x), xf(x))\\
&=&\left(\frac{\phi'}{f(\phi)} g(\phi), \phi f(\phi)\right)\\
&=&\left(\phi' \frac{g(\phi)}{f(\phi)}, \phi f(\phi)\right)\end{eqnarray*}
\end{proof}
\begin{corollary} We have
$$c(A;1)=\left(\frac{f(x)}{\phi'\left(\frac{x}{f(x)}\right)}, \frac{x}{f(x)}\right)^{-1}\cdot (g(x),xf(x)).$$
\end{corollary}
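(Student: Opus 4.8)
The plan is to verify that the Riordan array $\left(\frac{\phi'}{f(\phi)}, \phi\right)$ occurring in the Theorem is exactly the inverse of
$$B:=\left(\frac{f(x)}{\phi'\left(\frac{x}{f(x)}\right)}, \frac{x}{f(x)}\right),$$
for then the claimed identity follows at once by substituting this into the factorization $c(A;1)=\left(\frac{\phi'}{f(\phi)}, \phi\right)\cdot (g(x),xf(x))$ of the Theorem.

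First I would record the two defining facts about $\phi$. Writing $\psi(x)=x/f(x)$, the definition $\phi(x)=\textrm{Rev}(\psi)(x)$ means $\psi(\phi(x))=x$, i.e.
$$\frac{\phi(x)}{f(\phi(x))}=x$$
as formal power series; moreover $\psi(x)=x+\cdots$ forces $\phi'(0)=1$. These remarks also show that $B$ is a proper Riordan array: its second component $x/f(x)=x+\cdots$ has leading coefficient $1$, its first component evaluated at $0$ equals $f(0)/\phi'(0)=1$, and $\phi'(x/f(x))$ is a legitimate power series since $x/f(x)$ has zero constant term. Hence the inversion formula applies to $B$.

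Next I would apply the inversion rule $\left(a(x),b(x)\right)^{-1}=\left(\tfrac{1}{a(\overline{b(x)})},\,\overline{b(x)}\right)$ to $B$. The reversion of the second component $x/f(x)$ of $B$ is, by definition, $\phi$, so $B^{-1}=\left(\tfrac{1}{h(\phi)},\,\phi\right)$, where $h(x)=f(x)/\phi'(x/f(x))$. The one computation that remains is to simplify $h(\phi)$: substituting $x\mapsto\phi(x)$ gives $h(\phi)=f(\phi)\big/\phi'\!\left(\phi/f(\phi)\right)$, and by the identity $\phi(x)/f(\phi(x))=x$ the inner argument collapses to $x$, whence $h(\phi)=f(\phi)/\phi'$. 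Therefore $B^{-1}=\left(\phi'/f(\phi),\,\phi\right)$, and combining with the Theorem yields $c(A;1)=B^{-1}\cdot (g(x),xf(x))$, which is precisely the assertion.

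I expect the only genuinely delicate point to be the composition bookkeeping in evaluating $h(\phi)$ — specifically, recognizing that $\phi'\!\left(x/f(x)\right)$ composed with $\phi$ produces $\phi'\!\left(\phi/f(\phi)\right)=\phi'(x)$ rather than some more complicated expression — together with the routine but necessary verification that $B$ is a proper Riordan array so that the inversion formula is legitimately invoked.
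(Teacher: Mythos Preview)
Your proof is correct and follows essentially the same approach as the paper: both arguments rest on the Riordan inversion formula together with the identity $\overline{x/f(x)}=\phi$ (equivalently $\phi/f(\phi)=x$). The only cosmetic difference is direction---the paper inverts $\left(\phi'/f(\phi),\phi\right)$ to obtain $B$, whereas you invert $B$ to obtain $\left(\phi'/f(\phi),\phi\right)$---and you add the harmless extra check that $B$ is a proper Riordan array.
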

\begin{proof}
We calculate the inverse of $\left(\frac{\phi'}{f(\phi)}, \phi\right)$.
We obtain
\begin{eqnarray*}
\left(\frac{\phi'}{f(\phi)}, \phi\right)^{-1}&=&\left(\frac{1}{\frac{\phi'(\bar{\phi})}{f(\phi(\bar{\phi}))}},\bar{\phi}\right)\\
&=&\left(\frac{f(x)}{\phi'(\bar{\phi})}, \frac{x}{f(x)}\right)\\
&=&\left(\frac{f(x)}{\phi'\left(\frac{x}{f(x)}\right)}, \frac{x}{f(x)}\right).\end{eqnarray*}
\end{proof}
\begin{corollary} We have
$$c(A;1)^{-1}=\left(\frac{1}{g(\bar{v})} \frac{f(\bar{v})}{\phi'\left(\frac{\bar{v}}{f(\bar{v})}\right)}, \frac{\bar{v}}{f(\bar{v})}\right),$$
where $v(x)=xf(x)$.
\end{corollary}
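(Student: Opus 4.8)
The plan is to obtain $c(A;1)^{-1}$ by inverting the factorization already established in the preceding corollaries, rather than re-running the Lagrange-inversion computation from scratch. Starting from
$$c(A;1)=\left(\frac{f(x)}{\phi'\left(\frac{x}{f(x)}\right)}, \frac{x}{f(x)}\right)^{-1}\cdot (g(x),xf(x)),$$
and using $(PQ)^{-1}=Q^{-1}P^{-1}$ in the Riordan group, I would write
$$c(A;1)^{-1}=(g(x),xf(x))^{-1}\cdot\left(\frac{f(x)}{\phi'\left(\frac{x}{f(x)}\right)}, \frac{x}{f(x)}\right).$$
Setting $v(x)=xf(x)$, the inverse formula for Riordan arrays gives $(g(x),xf(x))^{-1}=\left(\frac{1}{g(\bar{v})},\bar{v}\right)$, so the whole problem reduces to one Riordan-array multiplication.

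Carrying out that product with the rule $(g(x),xf(x))\cdot(u(x),xw(x))=(g(x)u(xf(x)),xf(x)w(xf(x)))$: the second component of $\left(\frac{1}{g(\bar{v})},\bar{v}\right)\cdot\left(\frac{f(x)}{\phi'(x/f(x))},\frac{x}{f(x)}\right)$ is obtained by substituting $\bar{v}$ for $x$ in $\frac{x}{f(x)}$, which yields $\frac{\bar{v}}{f(\bar{v})}$; the first component is $\frac{1}{g(\bar{v})}$ times $\frac{f(x)}{\phi'(x/f(x))}$ evaluated at $x=\bar{v}$, which yields $\frac{1}{g(\bar{v})}\cdot\frac{f(\bar{v})}{\phi'\left(\frac{\bar{v}}{f(\bar{v})}\right)}$. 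Together these give exactly the asserted expression for $c(A;1)^{-1}$.

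As a cross-check (and an alternative route), one may instead invert $c(A;1)=\left(\phi'\frac{g(\phi)}{f(\phi)},\phi f(\phi)\right)$ directly via the Riordan inverse formula, which requires the reversion $\overline{\phi f(\phi)}$. Here the useful observation is that $\phi f(\phi)=v(\phi)$ with $v(x)=xf(x)$ and $\phi=\textrm{Rev}(x/f(x))$, so $\phi f(\phi)$ is the composition $v\circ\textrm{Rev}(p)$ with $p(x)=x/f(x)$; since reversion reverses composition, $\overline{\phi f(\phi)}=p\circ\bar{v}=\frac{\bar{v}}{f(\bar{v})}$, and moreover $\phi\!\left(\frac{\bar{v}}{f(\bar{v})}\right)=\textrm{Rev}(p)(p(\bar{v}))=\bar{v}$, after which substitution into $\phi'\frac{g(\phi)}{f(\phi)}$ and taking the reciprocal reproduces the same formula. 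All the reversions occurring in either argument exist because $f(0)=1$ forces $v'(0)=p'(0)=1\ne 0$. The only place that needs genuine care is the composition-of-reversions bookkeeping in this second route; the first route, being a plain inversion of a product already in hand, sidesteps that and is the path I would actually write up.
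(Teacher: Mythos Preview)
Your primary argument is correct and is exactly the paper's proof: invert the factorization $c(A;1)=\left(\frac{f(x)}{\phi'(x/f(x))},\frac{x}{f(x)}\right)^{-1}\cdot(g,v)$, write $(g,v)^{-1}=\left(\frac{1}{g(\bar v)},\bar v\right)$, and carry out the single Riordan multiplication to obtain the stated pair. Your alternative direct-inversion cross-check is a pleasant extra, but the written-up route already matches the paper line for line.
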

\begin{proof}
From the last result, we have
$$c(A;1)=\left(\frac{f(x)}{\phi'\left(\frac{x}{f(x)}\right)}, \frac{x}{f(x)}\right)^{-1}\cdot (g(x),v(x)).$$
Taking inverses, we get
$$c(A;1)^{-1}=(g(x), v(x))^{-1} \cdot \left(\frac{f(x)}{\phi'\left(\frac{x}{f(x)}\right)}, \frac{x}{f(x)}\right).$$ Thus we have
\begin{eqnarray*}
c(A;1)^{-1}&=&\left(\frac{1}{g(\bar{v})}, \bar{v}\right)\cdot \left(\frac{f(x)}{\phi'\left(\frac{x}{f(x)}\right)}, \frac{x}{f(x)}\right)\\
&=&\left(\frac{1}{g(\bar{v})} \frac{f(\bar{v})}{\phi'\left(\frac{\bar{v}}{f(\bar{v})}\right)}, \frac{\bar{v}}{f(\bar{v})}\right).\end{eqnarray*}
\end{proof}
Note that we have
$$ \frac{\bar{v}}{f(\bar{v})}=\frac{\bar{v}^2}{x}.$$
 A consequence of this result is that the $Z$-sequence of the matrix $c(A;1)$ has the following generating function.
$$Z(x;1)=\frac{A(x)^2}{x}\left(1-\frac{1}{\phi'\left(\frac{\bar{v}}{f(\bar{v})}\right)} \frac{f(\bar{v})}{g(\bar{v})}\right).$$
\begin{example}
We let $$A=\left(\frac{1}{1-x}, \frac{x(1+x)}{1-x}\right).$$ Then
$A(x)=\frac{1+x+\sqrt{1+6x+x^2}}{2}$. We obtain
$$c(A;1)^{-1}=\left(\frac{(1-x)(\sqrt{1+6x+x^2}-x-1)}{2x}, \frac{(\sqrt{1+6x+x^2}-x-1)^2}{4x}\right),$$
and
$$Z(x;1)=\frac{A(x)^2}{x}\left(1-\frac{(1-x)(\sqrt{1+6x+x^2}-x-1)}{2x}\right)=2+x+\sqrt{1+6x+x^2},$$ where
$Z(x;1)$ is the generating function for the $Z$-sequence of $c(A;1)$.
\end{example}
Our next result concerns elements of the Bell subgroup of Riordan arrays.
\begin{corollary} Let $A=(f(x), xf(x))$ be a member of the Bell subgroup of the Riordan group.
Then
$$c(A;1)=\left(\phi', \phi f(\phi)\right),$$ and
$$c(A;1)^{-1}=\left(\frac{1}{\phi'\left(\frac{\bar{v}}{f(\bar{v})}\right)}, \frac{\bar{v}}{f(\bar{v})}\right).$$
\end{corollary}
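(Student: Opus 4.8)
The plan is to obtain both identities by simply specializing the corollaries already proved to the defining feature of the Bell subgroup, namely that the Riordan array has the form $(f(x),xf(x))$, i.e. $g=f$. The key observation that makes this painless is that neither of the auxiliary series occurring in the earlier formulas depends on $g$: we have $\phi(x)=\textrm{Rev}\!\left(\frac{x}{f(x)}\right)$ and $v(x)=xf(x)$, both functions of $f$ alone. Hence specializing $g\mapsto f$ only affects the scalar prefactors in the pairs, not the substitution parts, and the hypotheses of the main theorem still hold because $f(0)=1\neq 0$, so $A$ remains a proper Riordan array.

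For the first identity I would start from the corollary giving $c(A;1)=\left(\phi'\,\frac{g(\phi)}{f(\phi)},\ \phi f(\phi)\right)$. Substituting $g=f$ makes the ratio $\frac{g(\phi)}{f(\phi)}=\frac{f(\phi)}{f(\phi)}=1$, and the first component collapses to $\phi'$, yielding $c(A;1)=\left(\phi',\ \phi f(\phi)\right)$ at once.

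For the second identity I would start from the corollary expressing $c(A;1)^{-1}=\left(\frac{1}{g(\bar v)}\,\frac{f(\bar v)}{\phi'\!\left(\frac{\bar v}{f(\bar v)}\right)},\ \frac{\bar v}{f(\bar v)}\right)$, where $v(x)=xf(x)$. With $g=f$ the product $\frac{1}{g(\bar v)}\,f(\bar v)=\frac{f(\bar v)}{f(\bar v)}=1$, so the first component simplifies to $\frac{1}{\phi'\!\left(\frac{\bar v}{f(\bar v)}\right)}$, giving the claimed expression for $c(A;1)^{-1}$. (As a consistency check, one verifies that $\left(\phi',\phi f(\phi)\right)$ and $\left(\frac{1}{\phi'(\bar v/f(\bar v))},\frac{\bar v}{f(\bar v)}\right)$ multiply to the identity using $\overline{\phi f(\phi)}=v$ and $\frac{\bar v}{f(\bar v)}=\bar\phi\circ$, but this is automatic from the inverse-of-an-inverse relation.)

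There is essentially no obstacle here: the only thing worth a sentence is the remark that $\phi$ and $v$ are $g$-free, so the specialization is clean; an alternative, more laborious route would be to rerun the Lagrange--B\"urmann computation from the proof of the main theorem with $g=f$ inserted from the outset, but the substitution argument above is shorter and is the one I would present.
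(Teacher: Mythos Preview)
Your proof is correct and follows exactly the paper's approach: specialize the earlier corollaries $c(A;1)=\left(\phi'\frac{g(\phi)}{f(\phi)},\phi f(\phi)\right)$ and $c(A;1)^{-1}=\left(\frac{1}{g(\bar v)}\frac{f(\bar v)}{\phi'(\bar v/f(\bar v))},\frac{\bar v}{f(\bar v)}\right)$ to $g=f$, whereupon the ratios cancel. (Your parenthetical consistency check contains a garbled fragment ``$\bar\phi\circ$'' and the relation $\overline{\phi f(\phi)}=v$ is misstated---it should be $\overline{\phi f(\phi)}=\bar v/f(\bar v)$---but since you correctly note this check is unnecessary, the argument stands.)
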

\begin{proof} In this case, $g(x)=f(x)$ we have
$$c(A;1)=\left(\phi' \frac{g(\phi)}{f(\phi)}, \phi f(\phi)\right)=\left(\phi', \phi f(\phi)\right),$$ and similarly for $c(A;1)^{-1}$.
\end{proof}
\begin{corollary} The generating function of the central elements $a_{2n,n}$ of $A$ is given by
$$\phi'(x)\frac{g(\phi(x))}{f(\phi(x))}.$$
\end{corollary}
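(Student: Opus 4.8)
The plan is to read the statement directly off the preceding corollary. The first observation is that the central coefficients $a_{2n,n}$ are exactly the entries in column $0$ of the matrix $c(A;1)$: putting $k=0$ in the general term $a_{2n,n+k}$ of $c(A;1)$ returns $a_{2n,n}$. Hence the generating function we are after is the generating function of the leftmost column of $c(A;1)$.

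Next I would recall that if a Riordan array is presented in the form $(G(x),xH(x))$, then the generating function of its $0$th column is $G(x)$, since $[x^n]\,G(x)(xH(x))^0=[x^n]\,G(x)$. By the previous corollary,
$$c(A;1)=\left(\phi'\,\frac{g(\phi)}{f(\phi)},\ \phi f(\phi)\right),$$
so the generating function of its $0$th column, i.e.\ of the sequence $(a_{2n,n})_{n\ge 0}$, is $\phi'(x)\,\dfrac{g(\phi(x))}{f(\phi(x))}$, which is precisely the claimed formula.

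It remains only to note that this is a bona fide ordinary power series with constant term $1$, so that it legitimately occupies the first entry of a proper Riordan array: since $\phi=\textrm{Rev}\!\left(\frac{x}{f(x)}\right)$ and $\left.\frac{d}{dx}\frac{x}{f(x)}\right|_{x=0}=\frac{1}{f(0)}=1$, we have $\phi(0)=0$ and $\phi'(0)=1$, and combined with $g(\phi(0))=g(0)=1$ and $f(\phi(0))=f(0)=1$ this gives constant term $1$. No real obstacle arises here; the substance of the result is entirely contained in the earlier factorization of $c(A;1)$. (Alternatively one could bypass the corollary and argue directly from Lagrange inversion, specialising the computation in the proof of the Theorem to $k=0$ so that $\sum_{n\ge0}a_{2n,n}x^n$ appears as the $0$th-column generating function of $\left(\frac{\phi'}{f(\phi)},\phi\right)\cdot(g(x),xf(x))$, and then using the identity $\phi'(1-xf'(\phi))=f(\phi)$ --- obtained by differentiating $\phi=xf(\phi)$ --- to reconcile this with the standard "diagonal" form; but invoking the corollary is the shortest route.)
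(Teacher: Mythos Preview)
Your argument is correct and is exactly the intended one: the paper states this corollary without proof, immediately after establishing that $c(A;1)=\left(\phi'\,\dfrac{g(\phi)}{f(\phi)},\ \phi f(\phi)\right)$, so the result is meant to be read off as the generating function of the $0$th column, just as you do. The extra verification that the constant term equals $1$ and the aside about the Lagrange-inversion alternative are fine but unnecessary.
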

This last result can also be found in \cite{Central}

\section{Examples}

\begin{example} We look again at the example of
$$A=\left(\frac{1}{1-x}, \frac{x(1+x)}{1-x}\right) \quad\quad (\seqnum{A008288}) \cite{Pascal, Del}.$$
For this, we have
$$g(x)=\frac{1}{1-x}, \quad f(x)=\frac{1+x}{1-x}, \quad \frac{x}{f(x)}=\frac{x(1-x)}{1+x},\quad, \phi(x)=\frac{1-x-\sqrt{1-6x+x^2}}{2}.$$
It follows that
$$\phi'(x)\frac{g(\phi(x))}{f(\phi(x))}=\frac{1}{\sqrt{1-6x+x^2}},$$
and
$$\phi(x)f(\phi(x))=\frac{(1-x-\sqrt{1-6x+x^2})^2}{4x}.$$
Hence we have
$$c(A;1)=\left(\frac{1}{\sqrt{1-6x+x^2}}, \frac{(1-x-\sqrt{1-6x+x^2})^2}{4x}\right).$$
Now let $B=\left(\frac{\phi'}{f(\phi)}, \phi\right)$, so that
$$c(A;1)= B \cdot A.$$
We find that
$$B^{-1}=\left(\frac{1-2x-x^2}{1-x^2}, \frac{x(1-x)}{1+x}\right),$$ so that
$$B=\left(\frac{1+x+\sqrt{1-6x+x^2}}{2\sqrt{1-6x+x^2}}, \frac{1-x-\sqrt{1-6x+x^2}}{2}\right).$$
Finally, we calculate $c(A;1)^{-1}$ as follows.
\begin{eqnarray*}
c(A;1)^{-1}&=& A^{-1}\cdot B^{-1}\\
&=& \left(\frac{3+x-\sqrt{1+6x+x^2}}{2}, \frac{\sqrt{1+6x+x^2}-x-1}{2}\right)\cdot \left(\frac{1-2x-x^2}{1-x^2}, \frac{x(1-x)}{1+x}\right)\\
&=& \left(\frac{(1-x)\sqrt{1+6x+x^2}+x^2-1}{2x}, \frac{1+4x+x^2-(1+x)\sqrt{1+6x+x^2}}{2x}\right).\end{eqnarray*}
\end{example}
\begin{example} We now take the example of
$$A=\left(\frac{1}{c(x)}, \frac{x}{c(x)}\right),$$
where $$c(x)=\frac{1-\sqrt{1-4x}}{2x}$$ is the generating function of the Catalan numbers
$C_n=\frac{1}{n+1}\binom{2n}{n}$.
For this example, we have
$$g(x)=\frac{1}{c(x)}, \quad f(x)=\frac{x}{c(x)}, \quad \frac{x}{f(x)}=xc(x),\quad \phi(x)=x(1-x).$$
Thus
$$\phi'(x)=\frac{d}{dx} x(1-x)=1-2x.$$
We find that
\begin{eqnarray*}c(A;1)&=&\left(\frac{1-2x}{1-x}, x(1-x)\right)\cdot \left(\frac{1}{c(x)}, \frac{x}{c(x)}\right)\\
&=& \left(\frac{1-2x}{1-x} \frac{1}{c(x(1-x))}, \frac{x(1-x)}{c(x(1-x))}\right)\\
&=& (1-2x, x(1-x)^2).\end{eqnarray*}
\end{example}
\begin{example} We now look at the Catalan matrix
$$A=(c(x),xc(x))\quad \quad (\seqnum{A033184}),$$ which begins
\begin{displaymath}A=\left(\begin{array}{ccccccc} 1 & 0 & 0 & 0
&0 & 0 & \cdots \\1 & 1 & 0 & 0 & 0 & 0 & \cdots \\ 2 & 2 & 1 &
0 & 0 & 0 &
\cdots \\ 5 & 5 & 3 & 1 & 0 & 0 & \cdots \\ 14 & 14 & 9 & 4 &
1 & 0 & \cdots \\42 & 42  & 28 & 14 & 5 & 1 &\cdots\\ \vdots
& \vdots &
\vdots & \vdots & \vdots & \vdots &
\ddots\end{array}\right),\end{displaymath} with central elements that begin
$$1, 2, 9, 48, 275, 1638, 9996, 62016,\ldots,$$ or $\frac{n+1}{2n+1}\binom{3n}{n}$ (\seqnum{A174687}). This triangle is \seqnum{A033184}, with general term $\frac{k+1}{n+1} \binom{2n-k}{n-k}$. We note that the
production matrix of $(c(x), xc(x))$ has production array that begins
\begin{displaymath}\left(\begin{array}{ccccccc} 1 & 1 & 0 & 0
&0 & 0 & \cdots \\1 & 1 & 1 & 0 & 0 & 0 & \cdots \\ 1 & 1 & 1 &
1 & 0 & 0 &
\cdots \\ 1 & 1 & 1 & 1 & 1 & 0 & \cdots \\ 1 & 1 & 1 & 1 &
1 & 1 & \cdots \\1 & 1  & 1 & 1 & 1 & 1 &\cdots\\ \vdots
& \vdots &
\vdots & \vdots & \vdots & \vdots &
\ddots\end{array}\right).\end{displaymath}  In particular, the $A$-sequence of $(c(x),xc(x))$ has
g.f. given by $\frac{1}{1-x}$.

The associated central triangle begins
\begin{displaymath}c(A;1)=\left(\begin{array}{ccccccc} 1 & 0 & 0 & 0
&0 & 0 & \cdots \\2 & 1 & 0 & 0 & 0 & 0 & \cdots \\ 9 & 4 & 1 &
0 & 0 & 0 &
\cdots \\ 48 & 20 & 6 & 1 & 0 & 0 & \cdots \\ 275 & 110 & 35 & 8 &
1 & 0 & \cdots \\1638 & 637  & 208 & 54 & 10 & 1 &\cdots\\ \vdots
& \vdots &
\vdots & \vdots & \vdots & \vdots &
\ddots\end{array}\right).\end{displaymath}
For this example, we have
$$g(x)=f(x)=c(x), \quad \frac{x}{f(x)}=\frac{x}{c(x)}, \quad \phi(x)=2\sqrt{\frac{x}{3}}\sin\left(\frac{1}{3}\sin^{-1}\left(\frac{\sqrt{27x}}{2}\right)\right).$$
We note that $\phi(x)$ is the generating function of $\frac{1}{2n-1} \binom{3n-3}{n-1}$ (\seqnum{A001764}).
We have
$$\phi'(x)=\frac{1}{\sqrt{4-27x}}\cos\left(\frac{1}{3}\sin^{-1}\left(\frac{\sqrt{27x}}{2}\right)\right)+
\frac{1}{\sqrt{3x}}\sin\left(\frac{1}{3}\sin^{-1}\left(\frac{\sqrt{27x}}{2}\right)\right).$$
Then $\phi'(x)$ is the generating function of the central coefficients of $A=(c(x),xc(x))$, or $\frac{n+1}{2n+1}\binom{3n}{n}$ (\seqnum{A006013}).
We then have
\begin{eqnarray*}\phi(x) f(\phi(x))&=&\frac{1}{2}-\frac{3^{\frac{3}{4}}}{6}\sqrt{\sqrt{3}-8\sqrt{x}\sin\left(\frac{1}{3}\sin^{-1}\left(\frac{\sqrt{27x}}{2}\right)\right)}\\
&=& \frac{2}{3}\left(1-\cos\left(\frac{1}{3} \cos^{-1}\left(\frac{2-27x}{2}\right)\right)\right)\\
&=& \frac{4}{3} \sin\left(\frac{1}{3} \sin^{-1} \left(\frac{\sqrt{27x}}{2}\right)\right)^2.\end{eqnarray*}
This is the generating function of $\frac{1}{n}\binom{3n-2}{n-1}$. It is the reversion of $x(1-x)^2$.
Thus we obtain \begin{scriptsize}
$$c(A;1)=\left(\frac{1}{\sqrt{4-27x}}\cos\left(\frac{1}{3}\sin^{-1}\left(\frac{\sqrt{27x}}{2}\right)\right)+
\frac{1}{\sqrt{3x}}\sin\left(\frac{1}{3}\sin^{-1}\left(\frac{\sqrt{27x}}{2}\right)\right),\frac{1}{2}-\frac{3^{\frac{3}{4}}}{6}\sqrt{\sqrt{3}-8\sqrt{x}\sin\left(\frac{1}{3}\sin^{-1}\left(\frac{\sqrt{27x}}{2}\right)\right)}\right).$$
\end{scriptsize}
The $A$ sequence of $c(A;1)$ has generating function $\frac{1}{(1-x)^2}$. The production matrix of $c(A;1)$ begins
\begin{displaymath}\left(\begin{array}{ccccccc} 2 & 1 & 0 & 0
&0 & 0 & \cdots \\5 & 2 & 1 & 0 & 0 & 0 & \cdots \\ 10 & 3 & 2 &
1 & 0 & 0 &
\cdots \\ 19 & 4 & 3 & 2 & 1 & 0 & \cdots \\ 36 & 5 & 4 & 3 &
2 & 1 & \cdots \\69 & 6  & 5 & 4 & 3 & 2 &\cdots\\ \vdots
& \vdots &
\vdots & \vdots & \vdots & \vdots &
\ddots\end{array}\right).\end{displaymath}
\end{example}

\section{The second shifted central triangle}
In this section, we study the triangle $c(A;2)$ with general term $a_{2n+1,n+k+1}$. We have the following theorem.
\begin{theorem} The $2$-shifted central triangle $c(A;2)$ of the Riordan array $A=(g(x),xf(x))$ is a Riordan array which admits the factorization

$$c(A;2)=(\phi', \phi)\cdot A,$$ where
$$\phi(x)=\textrm{Rev}\left(\frac{x}{f(x)}\right), \quad \textrm{and}\quad \phi'(x)=\frac{d}{dx}\phi(x).$$
\end{theorem}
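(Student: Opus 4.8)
The plan is to follow the proof of the corresponding theorem for $c(A;1)$ almost verbatim, using Lagrange--B\"urmann inversion as the engine. The only substantive change is that one extra factor of $f(x)$ now survives, and this is exactly what causes the $1/f(\phi)$ factor present in the factorization of $c(A;1)$ to disappear from that of $c(A;2)$.

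First I would strip the powers of $x$ from the general term. Since $a_{2n+1,n+k+1}=[x^{2n+1}]g(x)(xf(x))^{n+k+1}$, pulling out the factor $x^{n+k+1}$ gives
$$a_{2n+1,n+k+1}=[x^{n-k}]g(x)f(x)^{n+k+1}=[x^n]g(x)(xf(x))^k f(x)^{n+1}.$$
Expanding by the Cauchy product and using $a_{i,k}=[x^i]g(x)(xf(x))^k$, this becomes
$$a_{2n+1,n+k+1}=\sum_{i=0}^n a_{i,k}\,[x^{n-i}]f(x)^{n+1}=\sum_{i=0}^n a_{i,k}\,[x^n]x^i f(x)^{n+1}.$$

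Next I would evaluate $[x^n]x^i f(x)^{n+1}$ using Lagrange inversion in the form stated just after the Lagrange--B\"urmann theorem, taking $v(x)=x/f(x)$ (so that $x/v=f$ and $\phi=\textrm{Rev}(x/f(x))$) together with a power series $F$ satisfying $F'(x)=x^i$ (e.g. $F(x)=x^{i+1}/(i+1)$). This yields
\begin{eqnarray*}
[x^n]x^i f(x)^{n+1}&=&(n+1)\cdot\frac{1}{n+1}[x^n]F'(x)f(x)^{n+1}\\
&=&(n+1)[x^{n+1}]F\big(\phi(x)\big)\\
&=&[x^n]\frac{d}{dx}F\big(\phi(x)\big)\\
&=&[x^n]F'(\phi)\,\phi'\\
&=&[x^n]\phi(x)^i\phi'(x).
\end{eqnarray*}
Substituting back, $a_{2n+1,n+k+1}=\sum_{i=0}^n m_{n,i}a_{i,k}$ where $m_{n,i}=[x^n]\phi'(x)\phi(x)^i$ is the general term of the Riordan array $(\phi',\phi)$; this array is proper since $f(0)=1$ forces $\phi(0)=0$ and $\phi'(0)=1$. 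Hence $c(A;2)=(\phi',\phi)\cdot A$, which in particular exhibits $c(A;2)$ as a Riordan array.

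There is no genuine obstacle here beyond bookkeeping: the one decision that matters is to take $F'(x)=x^i$ rather than $F'(x)=x^i/f(x)$ as in the $c(A;1)$ case, and one should note that the step $(n+1)[x^{n+1}]F(\phi)=[x^n]\frac{d}{dx}F(\phi)$ is valid for any formal power series. I would also remark, as a follow-up parallel to the corollaries after the first theorem, that combining this factorization with the Riordan multiplication rule gives $c(A;2)=\big(\phi'\,g(\phi),\,\phi f(\phi)\big)$.
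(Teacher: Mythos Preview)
Your proof is correct and follows essentially the same route as the paper's: strip powers of $x$ to reach $[x^n]g(x)(xf(x))^k f(x)^{n+1}$, split via the Cauchy product, and apply Lagrange--B\"urmann inversion with $F'(x)=x^i$ and $v(x)=x/f(x)$ to identify $[x^n]x^i f(x)^{n+1}=[x^n]\phi'(x)\phi(x)^i$. Your added remarks on why $(\phi',\phi)$ is proper and on the resulting closed form $c(A;2)=(\phi' g(\phi),\phi f(\phi))$ match the paper's subsequent corollary.
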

\begin{proof}
We have
\begin{eqnarray*}
a_{2n+1,n+k+1}&=&[x^{2n+1}] g(x)(xf(x))^{n+k+1}\\
&=& [x^{2n+1}] x^{n+k+1} g(x) f(x)^k f(x)^{n+1}\\
&=& [x^n] g(x) (xf(x))^k f(x)^{n+1}\\
&=& \sum_{i=0}^n [x^i] g(x)(xf(x))^k [x^{n-i)} f(x)^{n+1}\\
&=& \sum_{i=0}^n a_{i,k} [x^n] x^i f(x)^{n+1}\\
&=& \sum_{i=0}^n a_{i,k} (n+1)\frac{1}{n+1}[x^n]F'(x)f(x)^{n+1} \quad (F'(x)=x^i)\\
&=& \sum_{i=0}^n a_{i,k} (n+1) [x^{n+1}] F\left(\textrm{Rev}\left(\frac{x}{f(x)}\right)\right)\\
&=& \sum_{i=0}^n a_{i,k} [x^n] F'\left(\textrm{Rev}\left(\frac{x}{f(x)}\right)\right) \frac{d}{dx}\textrm{Rev}\left(\frac{x}{f(x)}\right)\\
&=& \sum_{i=0}^n a_{i,k} [x^n] \left(\textrm{Rev}\left(\frac{x}{f(x)}\right)\right)^i \frac{d}{dx}\textrm{Rev}\left(\frac{x}{f(x)}\right)\\
&=& \sum_{i=0}^n a_{i,k} [x^n] \phi'(x) (\phi(x))^i \\
&=& \sum_{i=0}^n \mu_{n,i} a_{i,k},\end{eqnarray*}
where
$$\mu_{n,k}=[x^n] \phi'(x) (\phi(x))^k$$ is the general term of the Riordan array
$$(\phi', \phi).$$
\end{proof}
\begin{corollary}
We have
$$c(A;2)=(\phi' g(\phi), \phi f(\phi)).$$
\end{corollary}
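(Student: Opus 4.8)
The plan is to read off the result directly from the factorization $c(A;2)=(\phi',\phi)\cdot A$ furnished by the preceding theorem, applying the Riordan group multiplication rule exactly as was done for $c(A;1)$. First I would record $A=(g(x),xf(x))$ and note that $(\phi',\phi)$ is a genuine (proper) Riordan array: since $x/f(x)=x(1+O(x))$ because $f(0)=1$, its reversion satisfies $\phi(x)=x+O(x^2)$, so writing $\phi(x)=x\psi(x)$ we have $\psi(0)=1$ and $\phi'(0)=1\neq 0$. Thus $(\phi',\phi)=(\phi'(x),x\psi(x))$ is in the standard form $(g,xf)$, with ``multiplier part'' $\phi'$ and ``substitution part'' $\phi$.

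Next I would apply the product formula from the Preliminaries,
$$(g(x),xf(x))\cdot(u(x),xv(x))=(g(x)\,u(xf(x)),\,xf(x)\,v(xf(x))),$$
with the left factor $(\phi',\phi)$ and the right factor $A=(g,xf)$. Concretely, multiplication of Riordan arrays is composition: the substitution part $\phi(x)$ of the left factor is substituted into the components of $A$, and the two multiplier parts are multiplied. This yields the new multiplier $\phi'(x)\,g(\phi(x))$ and the new substitution part $\phi(x)\,f(\phi(x))$, i.e.
$$c(A;2)=(\phi',\phi)\cdot(g(x),xf(x))=\bigl(\phi'(x)\,g(\phi(x)),\ \phi(x)\,f(\phi(x))\bigr)=(\phi'g(\phi),\,\phi f(\phi)).$$

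I do not anticipate a genuine obstacle: the statement is an immediate corollary of the theorem together with the group law, mirroring the computation already carried out for $c(A;1)$ in an earlier corollary. The only point requiring care is bookkeeping — correctly matching the two entries of each Riordan pair to the roles of ``multiplier'' and ``substitution'' in the product rule, and observing that $\phi$ (rather than $x$ times a unit) is the entry being substituted. Once that is kept straight, the displayed line above is the entire proof.
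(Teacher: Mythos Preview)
Your proposal is correct and follows exactly the approach implicit in the paper: the corollary is stated there without proof, being an immediate consequence of the factorization $c(A;2)=(\phi',\phi)\cdot A$ together with the Riordan group product rule, precisely mirroring the computation the paper carries out explicitly for $c(A;1)$. Your bookkeeping remarks about $\phi(x)=x\psi(x)$ with $\psi(0)=1$ are a nice extra justification that $(\phi',\phi)$ is a genuine Riordan array, but otherwise nothing needs to be added.
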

\begin{corollary}
The Riordan arrays $c(A;1)$ and $c(A;2)$ are related by
$$ c(A;2)=(f(\phi),x)\cdot c(A;1).$$
\end{corollary}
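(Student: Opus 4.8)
The plan is to read off the relation directly from the two factorizations already proved, so that no computation at the level of individual entries is needed. Recall that we have shown $c(A;1)=\left(\frac{\phi'}{f(\phi)},\phi\right)\cdot A$ and $c(A;2)=(\phi',\phi)\cdot A$, where in both cases $\phi(x)=\textrm{Rev}\left(\frac{x}{f(x)}\right)$. Since $f(0)=1$ and $\phi(0)=0$, the series $f(\phi(x))$ has constant term $1$, so $(f(\phi),x)$ is a proper Riordan array (an element of the Appell subgroup, its ``$xf$''-part being the identity series $x$); hence multiplying by it on the left is a legitimate operation in the Riordan group.

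First I would multiply $(f(\phi),x)$ by the left-hand factor of $c(A;1)$. Because the ``$xf$''-part of $(f(\phi),x)$ is just $x$, the Riordan multiplication rule $(g,xf)\cdot(u,xv)=(g\cdot u(xf),\,xf\cdot v(xf))$ specializes to $(f(\phi),x)\cdot(u,xv)=(f(\phi)\cdot u,\,xv)$. Applying this with $(u,xv)=\left(\frac{\phi'}{f(\phi)},\phi\right)$ gives
$$(f(\phi),x)\cdot\left(\frac{\phi'}{f(\phi)},\phi\right)=\left(f(\phi)\cdot\frac{\phi'}{f(\phi)},\,\phi\right)=(\phi',\phi).$$
Then, by associativity of the Riordan product,
$$(f(\phi),x)\cdot c(A;1)=(f(\phi),x)\cdot\left(\frac{\phi'}{f(\phi)},\phi\right)\cdot A=(\phi',\phi)\cdot A=c(A;2),$$
where the last equality is exactly the factorization established for $c(A;2)$. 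This proves the corollary.

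Alternatively, one can bypass $A$ entirely and work with the closed forms obtained in the two corollaries, $c(A;1)=\left(\phi'\frac{g(\phi)}{f(\phi)},\phi f(\phi)\right)$ and $c(A;2)=(\phi' g(\phi),\phi f(\phi))$: multiplying the first on the left by $(f(\phi),x)$ and again using that the left factor has trivial ``$f$''-part yields $\left(f(\phi)\cdot\phi'\frac{g(\phi)}{f(\phi)},\,\phi f(\phi)\right)=(\phi' g(\phi),\phi f(\phi))=c(A;2)$. Either way there is no real obstacle: the only points requiring a moment's attention are verifying that $(f(\phi),x)$ is an admissible Riordan array and correctly specializing the general composition formula to a left factor whose second component is $x$; the conclusion then follows from the single cancellation of $f(\phi)$ against $1/f(\phi)$.
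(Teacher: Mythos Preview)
Your proof is correct and matches the paper's intended reasoning: the paper states this corollary without an explicit proof, treating it as an immediate consequence of the two factorizations $c(A;1)=\left(\frac{\phi'}{f(\phi)},\phi\right)\cdot A$ and $c(A;2)=(\phi',\phi)\cdot A$, which is precisely what you carry out. Your verification that $(f(\phi),x)$ is a valid Riordan array and your careful specialization of the product rule are appropriate details to include.
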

Thus we have
$$c(A;2)\cdot c(A;1)^{-1}=(f(\phi), x).$$
We now look at the product
$$c(A;1)^{-1}\cdot c(A;2).$$
\begin{proposition}
We have
$$c(A;1)^{-1}\cdot c(A;2)=\left(\frac{x}{\phi\left(\overline{\phi(f(\phi))}\right)}, x\right).$$
\end{proposition}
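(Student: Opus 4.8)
The plan is to realize $c(A;1)^{-1}\cdot c(A;2)$ as a conjugate of the elementary array $(f(\phi),x)$ and then to observe that conjugating by a Riordan array affects such a near-identity array only through a reversion. Concretely, the corollary above gives $c(A;2)=(f(\phi),x)\cdot c(A;1)$, so
$$c(A;1)^{-1}\cdot c(A;2)=c(A;1)^{-1}\cdot(f(\phi),x)\cdot c(A;1).$$
I would write $c(A;1)=(G(x),H(x))$, where by the corollary $c(A;1)=\left(\phi'\frac{g(\phi)}{f(\phi)},\phi f(\phi)\right)$ the second component is $H(x)=\phi(x)f(\phi(x))$; the precise form of $G$ will turn out to be irrelevant.

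Next I would carry out the group multiplication in two steps, using the product rule $(a,b)\cdot(c,d)=(a\cdot(c\circ b),\,d\circ b)$ and the inversion rule $(G,H)^{-1}=\left(1/(G\circ\overline{H}),\,\overline{H}\right)$. First, because the second component of $(f(\phi),x)$ is simply $x$, we get $(f(\phi),x)\cdot(G,H)=(f(\phi)\,G,\,H)$. Then
$$c(A;1)^{-1}\cdot(f(\phi)\,G,\,H)=\left(\frac{1}{G(\overline{H})},\,\overline{H}\right)\cdot(f(\phi)\,G,\,H)=\left(\frac{1}{G(\overline{H})}\,f\big(\phi(\overline{H})\big)\,G(\overline{H}),\;H(\overline{H})\right)=\big(f(\phi(\overline{H})),\,x\big),$$
so the $G$-factors cancel and we are left with $\big(f(\phi(\overline{H})),\,x\big)$, where $\overline{H}=\overline{\phi\,f(\phi)}=\overline{\phi(f(\phi))}$ is exactly the series appearing in the statement.

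Finally I would identify the first component $f(\phi(\overline{H}))$. Since $H(x)=\phi(x)f(\phi(x))$ and $H(\overline{H}(x))=x$, we have $\phi(\overline{H}(x))\,f\big(\phi(\overline{H}(x))\big)=x$, hence $f\big(\phi(\overline{H}(x))\big)=x/\phi(\overline{H}(x))$. Substituting $\overline{H}=\overline{\phi(f(\phi))}$ yields
$$c(A;1)^{-1}\cdot c(A;2)=\left(\frac{x}{\phi\left(\overline{\phi(f(\phi))}\right)},\,x\right),$$
as claimed. The argument is essentially bookkeeping: the only points needing care are tracking the direction of composition in the Riordan product (so that the $G$-factors really do cancel) and recognizing that $\overline{H}$ coincides with $\overline{\phi(f(\phi))}$, which is what makes the last substitution legitimate. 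An alternative would be a direct Lagrange-inversion computation of the entries of the product, but the conjugation route is shorter and reuses results already established.
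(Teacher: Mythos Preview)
Your proof is correct and follows essentially the same approach as the paper: both arguments carry out the Riordan-group multiplication directly, arrive at the intermediate form $(f(\phi(\overline{\phi f(\phi)})),x)$, and then use the identity $\phi(s)f(\phi(s))=x$ for $s=\overline{\phi f(\phi)}$ to rewrite the first component as $x/\phi(\overline{\phi f(\phi)})$. Your framing via conjugation---invoking the corollary $c(A;2)=(f(\phi),x)\cdot c(A;1)$ so that the $G$-factors visibly cancel---is a tidy organizational variant of the paper's direct computation, but not a substantively different route.
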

\begin{proof}
We have
\begin{eqnarray*}
c(A;1)^{-1}\cdot c(A;2)&=&\left(\phi' \frac{g(\phi)}{f(\phi)}, \phi(f(\phi)\right)^{-1}\cdot (\phi' g(\phi), \phi(f(\phi)))\\
&=&\left(\frac{1}{\phi'(\overline{\phi f(\phi)}) \frac{g(\phi(\overline{\phi f(\phi)}))}{f(\phi(\overline{\phi f(\phi)}))}}, \overline{\phi(f(\phi))} \right) \cdot (\phi' g(\phi), \phi(f(\phi)))\\
&=& \left(\frac{f(\phi(\overline{\phi(f(\phi))}))}{\phi'(\overline{\phi(f(\phi))}) g(\phi(\overline{\phi(f(\phi))}))}\cdot \phi'(\overline{\phi(f(\phi))}) g(\phi(\overline{\phi(f(\phi))})), x\right)\\
&=& (f(\phi(\overline{\phi(f(\phi))})), x)\\
&=& \left(\frac{1}{\phi(\overline{\phi(f(\phi))})} \cdot \phi(\overline{\phi(f(\phi))})f(\phi(\overline{\phi(f(\phi))})),x\right)\\
&=& \left(\frac{x}{\phi\left(\overline{\phi(f(\phi))}\right)}, x\right).
\end{eqnarray*}
Thus
$$c(A;1)^{-1} \cdot c(A;2)=\left(\frac{x}{\phi\left(\overline{\phi(f(\phi))}\right)}, x\right).$$
\end{proof}
\begin{example}
We take the case of the binomial triangle
$$A=\left(\frac{1}{1-x}, \frac{x}{1-x}\right),$$ with general element
$$a_{n,k}=\binom{n}{k}.$$
Thus
$$c(A;1)=\left(\binom{2n}{n+k}\right),\quad \textrm{and} \quad c(A;2)=\left(\binom{2n+1}{n+k+1}\right).$$
We have $f(x)=\frac{1}{1-x}$ and so $\frac{x}{f(x)}=x(1-x)$. Thus
$$\phi(x)=xc(x),$$ and
$$\phi(x)f(\phi(x))=\frac{1-2x-\sqrt{1-4x}}{2x} \Rightarrow \textrm{Rev}(\phi f(\phi))=\frac{x}{(1+x)^2}.$$
Then
$$\frac{x}{\phi\left(\frac{x}{(1+x)^2}\right)}=1+x.$$
Thus in this case
$$c(A;1)^{-1} \cdot c(A;2)= (1+x,x).$$
Now
$$f(\phi(x))=\frac{1}{1-xc(x)}=c(x),$$ so that
$$c(A;2) \cdot c(A;1)^{-1}= (c(x),x).$$
\end{example}
We end this section with the following observation about a special conjugation.
\begin{proposition}
We have
$$c(A;1)^{-1}\cdot c(A;2) \cdot c(A;1)^{-1}=\left(\phi' \frac{g(\phi)}{f(\phi)^2}, \phi f(\phi)\right)^{-1}.$$
\end{proposition}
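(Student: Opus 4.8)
The plan is to compute the triple product directly from the closed forms established above, $c(A;1)=\left(\phi'\frac{g(\phi)}{f(\phi)},\,\phi f(\phi)\right)$ and $c(A;2)=\left(\phi'g(\phi),\,\phi f(\phi)\right)$, and then to recognise the answer as the displayed inverse. Throughout I write $\psi=\phi f(\phi)$ for the common second component of $c(A;1)$ and $c(A;2)$, with reversion $\bar{\psi}$; the Riordan inversion formula then gives $c(A;1)^{-1}=\left(\frac{f(\phi(\bar{\psi}))}{\phi'(\bar{\psi})\,g(\phi(\bar{\psi}))},\,\bar{\psi}\right)$.

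First I would form $c(A;1)^{-1}\cdot c(A;2)$ using the Riordan product rule from the preliminaries. The second component composes to $\psi(\bar{\psi})=x$, and the first component is $\frac{f(\phi(\bar{\psi}))}{\phi'(\bar{\psi})g(\phi(\bar{\psi}))}\cdot\bigl(\phi'g(\phi)\bigr)\!\bigl(\bar{\psi}\bigr)=f(\phi(\bar{\psi}))$, so that $c(A;1)^{-1}\cdot c(A;2)=\bigl(f(\phi(\bar{\psi})),\,x\bigr)$, as found in the earlier computation of this product. I would then multiply on the right by $c(A;1)^{-1}$; since the left factor $\bigl(f(\phi(\bar{\psi})),\,x\bigr)$ has second component $x$, composition is trivial and the product equals $\left(f(\phi(\bar{\psi}))\cdot\frac{f(\phi(\bar{\psi}))}{\phi'(\bar{\psi})g(\phi(\bar{\psi}))},\,\bar{\psi}\right)=\left(\frac{f(\phi(\bar{\psi}))^2}{\phi'(\bar{\psi})\,g(\phi(\bar{\psi}))},\,\bar{\psi}\right)$.

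It then remains to expand the right-hand side of the claim. Inverting $\left(\phi'\frac{g(\phi)}{f(\phi)^2},\,\phi f(\phi)\right)$ replaces the second component by $\bar{\psi}$ and the first component by the reciprocal of $\phi'\frac{g(\phi)}{f(\phi)^2}$ evaluated at $\bar{\psi}$, namely $\frac{f(\phi(\bar{\psi}))^2}{\phi'(\bar{\psi})\,g(\phi(\bar{\psi}))}$. This is precisely the array obtained in the previous step, so the two sides agree componentwise, which is the assertion.

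The argument is purely computational, so I do not anticipate a genuine obstacle; the only point needing care is the bookkeeping of the order of substitution in the Riordan product and in the inversion formula, together with the cancellations $\frac{\phi'g(\phi)}{\phi'g(\phi)/f(\phi)}=f(\phi)$ and $\frac{f(\phi)}{\phi'g(\phi)/f(\phi)}=\frac{f(\phi)^2}{\phi'g(\phi)}$. I would present the whole thing as a short chain of equalities in the same style as the proof of the preceding proposition.
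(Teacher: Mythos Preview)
Your argument is correct and follows essentially the same approach as the paper: a direct computation using the Riordan product and inversion rules together with the closed forms for $c(A;1)$ and $c(A;2)$. The only cosmetic difference is that the paper inverts the triple product and simplifies to $\left(\phi'\frac{g(\phi)}{f(\phi)^2},\phi f(\phi)\right)$ (in its alternative version using $c(A;2)\cdot c(A;1)^{-1}=(f(\phi),x)$ to avoid $\bar{\psi}$ entirely), whereas you expand both sides in terms of $\bar{\psi}$ and match them; the content is the same.
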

\begin{proof} We have
\begin{eqnarray*}
c(A;1)^{-1}\cdot c(A;2) \cdot c(A;1)^{-1}&=& \left(\frac{x}{\phi(s)},x\right)\cdot c(A;1)^{-1} \quad \textrm{where }s(x)=\overline{\phi(x) f(\phi(x))}\\
&=& \left(\frac{x}{\phi(s)},x\right) \cdot \left(\phi' \frac{g(\phi)}{f(\phi)}, \phi f(\phi)\right)^{-1}.\end{eqnarray*}
Hence
\begin{eqnarray*}
\left(c(A;1)^{-1}\cdot c(A;2) \cdot c(A;1)^{-1}\right)^{-1}&=&\left(\phi' \frac{g(\phi)}{f(\phi)}, \phi f(\phi)\right) \cdot \left(\frac{\phi(s)}{x},x\right)\\
&=&\left(\phi' \frac{g(\phi)}{f(\phi)} \frac{\phi(x)}{\phi(x)f(\phi(x))}, \phi f(\phi)\right)  \quad \textrm{since }s(\phi(x)f(\phi(x)))=x\\
&=& \left(\phi' \frac{g(\phi)}{f(\phi)^2}, \phi f(\phi)\right).\end{eqnarray*}
Alternatively we note that
$$c(A;1)^{-1}\cdot c(A;2) \cdot c(A;1)^{-1}=c(A;1)^{-1} \cdot (f(\phi), x),$$ and hence
$$\left(c(A;1)^{-1}\cdot c(A;2) \cdot c(A;1)^{-1}\right)^{-1}=\left(\frac{1}{f(\phi)},x\right)\cdot c(A;1)=\left(\phi' \frac{g(\phi)}{f(\phi)^2}, \phi f(\phi)\right).$$
\end{proof}
\begin{example}
We take the example of $A=\left(\frac{1}{1-x}, \frac{x(1-x)}{1+x}\right)$.
Then we find that
$$c(A;1)^{-1}\cdot c(A;2) \cdot c(A;1)^{-1}=\left(\frac{1-x+\sqrt{1-6x+x^2}}{2\sqrt{1-6x+x^2}}, \frac{1-4x+x^2-(1-x)\sqrt{1-6x+x^2}}{2x}\right),$$ which begins
\begin{displaymath}\left(\begin{array}{ccccccc} 1 & 0 & 0 & 0
&0 & 0 & \cdots \\1 & 1 & 0 & 0 & 0 & 0 & \cdots \\ 5 & 5 & 1 &
0 & 0 & 0 &
\cdots \\ 25 & 25 & 9 & 1 & 0 & 0 & \cdots \\ 129 & 129 & 61 & 13 &
1 & 0 & \cdots \\681 & 681  & 377 & 113 & 17 & 1 &\cdots\\ \vdots
& \vdots &
\vdots & \vdots & \vdots & \vdots &
\ddots\end{array}\right).\end{displaymath}
The first column is \seqnum{A002002}, the number of peaks in all Schroeder paths from $(0,0)$ to $(2n,0)$. The matrix obtained by removing this first column is just $c(A;2)$.
\end{example}

\section{The General Case}
We now look at the general case of the $r$-shifted central triangle
$$c(A;r)=\left(a_{2n+r,n+k+r}\right).$$
\begin{theorem}
Let $A=(g(x), xf(x))$ be a Riordan array and let $c(A;r)$ be the matrix with general element $a_{2n+r,n+k+r}$. Then $c(A;r)$ is a Riordan array that has the following factorization.
$$c(A;r)= \left(\phi'(x) f(\phi(x))^{r-1}, \phi(x)\right)\cdot A.$$
\end{theorem}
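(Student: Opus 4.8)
The plan is to repeat, with the shift $r$ carried through the bookkeeping, the coefficient-extraction argument already used for $c(A;1)$ and $c(A;2)$. First I would write
$$a_{2n+r,n+k+r}=[x^{2n+r}]\,g(x)(xf(x))^{n+k+r}=[x^{2n+r}]\,x^{n+k+r}g(x)f(x)^{k}f(x)^{n+r},$$
and peel off $x^{n+r}$ from both the monomial $x^{n+k+r}$ and the extraction index, which gives
$$a_{2n+r,n+k+r}=[x^{n}]\,g(x)(xf(x))^{k}f(x)^{n+r}=\sum_{i=0}^{n}a_{i,k}\,[x^{n-i}]f(x)^{n+r},$$
using the Cauchy product and $[x^{i}]g(x)(xf(x))^{k}=a_{i,k}$. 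The task is thereby reduced to evaluating the single coefficient $[x^{n-i}]f(x)^{n+r}=[x^{n}]\bigl(x^{i}f(x)^{r-1}\bigr)f(x)^{n+1}$ for each $i$.

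The second step evaluates this coefficient by Lagrange--B\"urmann inversion in the consequence-form recorded above, $[x^{n}]F(\textrm{Rev}(v))=\frac{1}{n}[x^{n-1}]F'(x)(x/v)^{n}$, applied at index $n+1$. Taking $v(x)=x/f(x)$ and $\phi(x)=\textrm{Rev}(v)$, so that $x/v(x)=f(x)$, and choosing $F$ with $F'(x)=x^{i}f(x)^{r-1}$, I obtain $[x^{n}]F'(x)f(x)^{n+1}=(n+1)[x^{n+1}]F(\phi(x))$; on the other hand $(n+1)[x^{n+1}]F(\phi(x))=[x^{n}]\frac{d}{dx}F(\phi(x))=[x^{n}]F'(\phi(x))\phi'(x)$, whence
$$[x^{n-i}]f(x)^{n+r}=[x^{n}]\,\phi'(x)\,f(\phi(x))^{r-1}\,\phi(x)^{i}.$$
Substituting back,
$$a_{2n+r,n+k+r}=\sum_{i=0}^{n}\Bigl([x^{n}]\,\phi'(x)\,f(\phi(x))^{r-1}\,\phi(x)^{i}\Bigr)a_{i,k},$$
and the bracketed factor is exactly the $(n,i)$ entry of the Riordan array $\bigl(\phi'(x)f(\phi(x))^{r-1},\phi(x)\bigr)$, so the right-hand side is the $(n,k)$ entry of the matrix product $\bigl(\phi' f(\phi)^{r-1},\phi\bigr)\cdot A$, which is the asserted factorization.

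It remains to note that $\bigl(\phi' f(\phi)^{r-1},\phi\bigr)$ really is a proper Riordan array, which is what lets the factorization exhibit $c(A;r)$ itself as a Riordan array: since $v(x)=x/f(x)$ has $v(0)=0$ and $v'(0)=1/f(0)=1$, its reversion satisfies $\phi(0)=0$ and $\phi'(0)=1$, and $f(\phi(0))=f(0)=1$, so the multiplier $\phi' f(\phi)^{r-1}$ has constant term $1$ and $\phi$ has nonzero linear coefficient. (Taking $r=0$ and $r=1$ recovers the two central-triangle factorizations proved in the earlier sections.) I do not anticipate a genuine obstacle, since the whole argument is formal power series manipulation; the only delicate point is the exponent bookkeeping --- splitting $f(x)^{n+k+r}$ as $f(x)^{k}f(x)^{n+r}$ and then $f(x)^{n+r}$ as $f(x)^{r-1}f(x)^{n+1}$, and absorbing precisely $x^{n+r}$ rather than $x^{n+k+r}$ into $[x^{\bullet}]$ --- so that Lagrange inversion is applied in its clean ``index $n+1$'' form with $F'(x)=x^{i}f(x)^{r-1}$.
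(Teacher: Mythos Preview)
Your proof is correct and follows essentially the same route as the paper's own argument: the same coefficient-extraction reduction $a_{2n+r,n+k+r}=[x^n]g(x)(xf(x))^k f(x)^{n+r}$, the same Cauchy-product split, and the same application of Lagrange--B\"urmann with $F'(x)=x^i f(x)^{r-1}$ and $\phi=\textrm{Rev}(x/f(x))$. Your added check that $(\phi' f(\phi)^{r-1},\phi)$ is a proper Riordan array is a small bonus the paper leaves implicit.
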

\begin{proof}
We have
\begin{eqnarray*}
a_{2n+r,n+k+r}&=& [x^{2n+r}] g(x)(xf(x))^{n+k+r}\\
&=&[x^{2n}] x^{n+k+r}g(x)f(x)^k f(x)^{n+r}\\
&=&[x^n] g(x)(xf(x))^k f(x)^{n+r}\\
&=&\sum_{i=0}^n [x^i] g(x)(xf(x))^k [x^{n-i}]f(x)^{n+r}\\
&=& \sum_{i=0}^n a_{i,k} [x^n] x^i f(x)^{r-1} f(x)^{n+1}\\
&=& \sum_{i=0}^n a_{i,k} (n+1) \frac{1}{n+1} [x^n] F'(x) f(x)^{n+1} \quad (F'(x)=x^i f(x)^{r-1})\\
&=& \sum_{i=0}^n a_{i,k} (n+1) [x^{n+1}] F\left(\textrm{Rev}\left(\frac{x}{f(x)}\right)\right)\\
&=& \sum_{i=0}^n a_{i,k} [x^n] F'\left(\textrm{Rev}\left(\frac{x}{f(x)}\right)\right)\frac{d}{dx} \textrm{Rev}\left(\frac{x}{f(x)}\right)\\
&=& \sum_{i=0}^n a_{i,k} [x^n] \left(\textrm{Rev}\left(\frac{x}{f(x)}\right)\right)^i f\left(\textrm{Rev}\left(\frac{x}{f(x)}\right)\right)^{r-1} \frac{d}{dx} \textrm{Rev}\left(\frac{x}{f(x)}\right)\\
&=& \sum_{i=0}^n a_{i,k} [x^n] \phi'(x)f(\phi(x))^{r-1} (\phi(x))^i\\
&=& \sum_{i=0}^n m_{n,i} a_{i,k}, \end{eqnarray*}
where
$$m_{n,k}=[x^n] \phi'(x)f(\phi(x))^{r-1}(\phi(x))^k$$ is the general term of the Riordan array
$$\left(\phi'f(\phi)^{r-1}, \phi\right).$$
\end{proof}
\begin{corollary}
The generating function of the $A$-sequence of $c(A;r)$ is $A(x)^2$.
\end{corollary}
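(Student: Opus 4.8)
The plan is to reduce the claim to the case $r=1$, which was already settled in the corollary to the first theorem. First I would write out the factorization of the preceding theorem in ``multiplied-out'' form: applying the Riordan product rule to $c(A;r)=\left(\phi' f(\phi)^{r-1},\phi\right)\cdot(g(x),xf(x))$ gives
$$c(A;r)=\left(\phi'(x)\, f(\phi(x))^{r-1}\, g(\phi(x)),\ \phi(x) f(\phi(x))\right).$$
The key observation is then immediate: the parameter $r$ enters only the first (``$g$'') component of this Riordan array, while the second (``$h$'') component is $\phi(x) f(\phi(x))$, which is independent of $r$.

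Next I would invoke the fact used throughout the paper, namely that the $A$-sequence generating function of a Riordan array $(g(x),h(x))$ equals $x/\overline{h}(x)$, and hence depends on $h$ alone (equivalently, column $1$ of the production matrix depends only on $h$). Applied to $c(A;r)$, whose second component is $\phi f(\phi)$, this shows that the $A$-sequence of $c(A;r)$ is the same for every $r\ge 1$, in particular equal to that of $c(A;1)$, whose generating function was shown to be $A(x)^2$. For a self-contained derivation one can instead mimic the computation in the corollary to the first theorem: the left factor $B_r=\left(\phi' f(\phi)^{r-1},\phi\right)$ has $A$-sequence generating function $x/\overline{\phi}(x)=x/(x/f(x))=f(x)$ (again independent of $r$), so the product formula $A(x)\,A_{B_r}\!\left(x/A(x)\right)=A(x)\, f\!\left(x/A(x)\right)$ collapses, via $A(x)=x/\overline{xf(x)}$ and the identity $\overline{xf(x)}\, f(\overline{xf(x)})=x$, to $A(x)\cdot A(x)=A(x)^2$.

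I do not expect any genuine obstacle here: the substance is already contained in the $r=1$ corollary, and the only new point is the bookkeeping remark that the extra factor $f(\phi)^{r-1}$ modifies the multiplier but not the ``$h$''-part of $c(A;r)$, while the $A$-sequence sees only the ``$h$''-part. The mildly delicate step, should one prefer the self-contained version, is keeping the several nested reversions straight in $\overline{\phi f(\phi)}$ — but this is exactly the manipulation already carried out for $c(A;1)^{-1}$, where one finds $\overline{\phi f(\phi)}=\bar v/f(\bar v)$ with $v(x)=xf(x)$, from which $x/\overline{\phi f(\phi)}=(x/\bar v)^2=A(x)^2$ again.
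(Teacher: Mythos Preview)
Your proposal is correct and matches the paper's approach: the paper's proof is simply ``This follows as in the case of $c(A;1)$ due to the above factorization,'' and you have spelled out precisely why---the left factor $(\phi' f(\phi)^{r-1},\phi)$ has second component $\phi$ independent of $r$, so its $A$-sequence is still $f(x)$ and the product computation from the $r=1$ corollary goes through verbatim. Your additional remark that the multiplied-out second component $\phi f(\phi)$ is itself independent of $r$ is a clean alternative way to see the same thing.
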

\begin{proof} This follows as in the case of $c(A;1)$ due to the above factorization.
\end{proof}
\begin{corollary}
$$c(A;r)=(\phi' g(\phi) f(\phi)^{r-1}, \phi f(\phi)).$$
\end{corollary}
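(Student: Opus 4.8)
The plan is to read off the stated closed form directly from the factorization $c(A;r)=\left(\phi'(x)f(\phi(x))^{r-1},\,\phi(x)\right)\cdot A$ established in the preceding theorem, by performing the Riordan-array multiplication explicitly. Recall the product rule recorded in the Preliminaries: in the form $(G_1(x),H_1(x))\cdot(G_2(x),H_2(x))=(G_1(x)\,G_2(H_1(x)),\,H_2(H_1(x)))$, the product is obtained by composing the second functions and multiplying the first function of the left factor by the first function of the right factor composed with $H_1$. Here the left factor is $(\phi'(x)f(\phi(x))^{r-1},\,\phi(x))$, so $H_1(x)=\phi(x)$, while the right factor is $A=(g(x),xf(x))$, so $G_2(x)=g(x)$ and $H_2(x)=xf(x)$.

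First I would compute the new second component: $H_2(H_1(x))=H_2(\phi(x))=\phi(x)f(\phi(x))$. Then I would compute the new first component: $G_1(x)\cdot G_2(H_1(x))=\phi'(x)f(\phi(x))^{r-1}\cdot g(\phi(x))$. Assembling the two pieces yields
$$c(A;r)=\left(\phi'(x)\,g(\phi(x))\,f(\phi(x))^{r-1},\ \phi(x)f(\phi(x))\right),$$
which is exactly the claimed identity. It is worth remarking in passing that this is a bona fide Riordan array: since $\phi(0)=0$, $\phi'(0)=1$, $f(0)=1$ and $g(0)=1$, the first component has constant term $1$ and the second component has the form $x\cdot(\textrm{series with constant term }1)$, in agreement with the earlier observation that $c(A;r)$ is lower triangular with $1$'s on the diagonal.

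There is no genuine obstacle here; the only point requiring care is the direction of composition in the Riordan product — it is the first component $g$ of the \emph{right-hand} factor, not of the left-hand factor, that becomes composed with $\phi$, so that the answer contains $g(\phi)$ rather than $g$ itself. As a consistency check, specializing to $r=1$ gives $c(A;1)=(\phi'\,g(\phi)/f(\phi),\,\phi f(\phi))$ and to $r=2$ gives $c(A;2)=(\phi'\,g(\phi),\,\phi f(\phi))$, matching the formulas obtained in the earlier sections.
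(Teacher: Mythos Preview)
Your proof is correct and follows exactly the approach the paper takes for the analogous corollary in the $r=1$ case: one simply carries out the Riordan product $(\phi' f(\phi)^{r-1},\phi)\cdot(g,xf)$ using the multiplication rule from the Preliminaries. The paper states this general-$r$ corollary without an explicit proof, but your computation is precisely the intended one-line verification.
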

\begin{corollary} We have
$$c(A;r)=\left(\frac{1}{\phi'(\frac{x}{f(x)})f(x)^{r-1}}, \frac{x}{f(x)}\right)^{-1}\cdot A.$$
\end{corollary}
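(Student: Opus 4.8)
The plan is to deduce this immediately from the factorization $c(A;r)=\left(\phi'f(\phi)^{r-1},\phi\right)\cdot A$ proved in the preceding theorem. Since $A$ appears on the right of both expressions, it suffices to show that the first factor satisfies
$$\left(\phi'f(\phi)^{r-1},\phi\right)=\left(\frac{1}{\phi'\!\left(\frac{x}{f(x)}\right)f(x)^{r-1}},\ \frac{x}{f(x)}\right)^{-1},$$
i.e.\ to compute the inverse of the Riordan array $\left(\phi'f(\phi)^{r-1},\phi\right)$. This is the same computation as in the earlier corollary treating the case $r=1$, only with the multiplier $\phi'f(\phi)^{r-1}$ in place of $\phi'/f(\phi)$.

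First I would note that $\phi$ is a delta series to which the Riordan inversion formula applies: from $f(0)=1$ we get $\frac{x}{f(x)}=x+O(x^2)$, so its reversion $\phi=\mathrm{Rev}\!\left(\frac{x}{f(x)}\right)$ has $\phi(0)=0$ and $\phi'(0)=1$, and by definition $\bar\phi(x)=\frac{x}{f(x)}$ with $\phi(\bar\phi(x))=\bar\phi(\phi(x))=x$. Then I would apply the group inverse formula $(g(x),xf(x))^{-1}=\bigl(1/g(\overline{xf(x)}),\overline{xf(x)}\bigr)$ to $B:=\left(\phi'f(\phi)^{r-1},\phi\right)$, obtaining
$$B^{-1}=\left(\frac{1}{\phi'(\bar\phi)\,f(\phi(\bar\phi))^{r-1}},\ \bar\phi\right).$$
Finally I would substitute $\bar\phi(x)=\frac{x}{f(x)}$ and use $\phi(\bar\phi(x))=x$, so that $f(\phi(\bar\phi(x)))=f(x)$ and $\phi'(\bar\phi(x))=\phi'\!\left(\frac{x}{f(x)}\right)$; this turns $B^{-1}$ into the asserted Riordan array. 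Inverting and multiplying on the right by $A$ then finishes the proof.

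I do not expect any genuine obstacle here: the only thing requiring care is the direction of composition when the ``$g$-part'' $\phi'f(\phi)^{r-1}$ is evaluated at $\bar\phi$, and the fact that $\phi$ is compositionally invertible, which is automatic from $f(0)=1$. Everything else is formal power-series bookkeeping of the sort already performed for $c(A;1)$, so the argument is short.
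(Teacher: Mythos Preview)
Your proposal is correct and follows exactly the paper's approach: starting from the preceding factorization $c(A;r)=\left(\phi'f(\phi)^{r-1},\phi\right)\cdot A$, you compute the inverse of the left factor via the Riordan group inverse formula and then substitute $\bar\phi=\frac{x}{f(x)}$ and $\phi(\bar\phi)=x$, just as was done in the $r=1$ case. The paper's own proof is in fact terser than yours, recording only the substitution $\bar\phi=\frac{x}{f(x)}$ and leaving the inversion step implicit.
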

\begin{proof}
We have
\begin{eqnarray*}c(A;r)&=&\left(\frac{1}{\phi'(\bar{\phi})f(x)^{r-1}}, \frac{x}{f(x)}\right)^{-1}\cdot A\\
&=&\left(\frac{1}{\phi'(\frac{x}{f(x)})f(x)^{r-1}}, \frac{x}{f(x)}\right)^{-1}\cdot A.\end{eqnarray*}
\end{proof}
As in the case of $c(A;1)$ we can then show the following.
\begin{corollary} We have
$$c(A;r)^{-1}=\left( \frac{1}{\phi'\left(\frac{x}{f(x)}\right)g(\bar{v}) f(\bar{v})^{r-1}}, \frac{\bar{v}}{f(\bar{v})}\right).$$
\end{corollary}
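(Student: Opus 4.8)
The plan is to repeat, for general $r$, the computation that earlier produced the formula for $c(A;1)^{-1}$: start from the factorization of $c(A;r)$ as an explicit Riordan array times $A$ obtained in the preceding corollary, take inverses, and simplify using the multiplication rule of the Riordan group. Concretely, that corollary states $c(A;r)=C^{-1}\cdot A$ with
$$C=\left(\frac{1}{\phi'\!\left(\tfrac{x}{f(x)}\right)f(x)^{r-1}},\ \frac{x}{f(x)}\right),\qquad \phi(x)=\mathrm{Rev}\!\left(\tfrac{x}{f(x)}\right),$$
so that, using $(XY)^{-1}=Y^{-1}X^{-1}$, we get $c(A;r)^{-1}=A^{-1}\cdot C$.

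Writing $v(x)=xf(x)$, we have $A=(g(x),v(x))$, and the inversion formula for Riordan arrays gives $A^{-1}=\bigl(1/g(\bar v),\ \bar v\bigr)$. It then only remains to expand $A^{-1}\cdot C$ by the product rule $(g_1,h_1)\cdot(g_2,h_2)=\bigl(g_1\cdot(g_2\circ h_1),\,h_2\circ h_1\bigr)$ with $h_1=\bar v$: composition with $\bar v$ substitutes $x\mapsto\bar v$ throughout $C$, so the second component of the product is $\bar v/f(\bar v)$ — equivalently $\bar v^2/x$, the identity already used in the $c(A;1)$ discussion — and the first component is
$$\frac{1}{g(\bar v)}\cdot\frac{1}{\phi'\!\left(\tfrac{\bar v}{f(\bar v)}\right)f(\bar v)^{r-1}},$$
which is the asserted expression (the argument $\tfrac{x}{f(x)}$ of $\phi'$ in the statement being understood after the forced substitution $x\mapsto\bar v$, i.e.\ as $\bar v/f(\bar v)$). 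This completes the plan.

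The argument is purely mechanical; there is no real obstacle, since the substantive work was done in the main theorem of this section and the corollary expressing $c(A;r)$ as $C^{-1}\cdot A$ (which themselves rest on $\bar\phi=x/f(x)$ and $\phi\circ\bar\phi=\mathrm{id}$). The two points deserving a little care are the order reversal $c(A;r)^{-1}=A^{-1}\cdot C$ rather than $C^{-1}\cdot A^{-1}$, and applying the composition with $\bar v$ uniformly to every factor appearing in $C$, the argument of $\phi'$ included. As a consistency check one may specialize the parameter to recover the inverse formulas already obtained for the central and shifted central triangles.
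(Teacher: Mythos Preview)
Your argument is correct and follows exactly the route the paper intends: it simply says ``As in the case of $c(A;1)$ we can then show the following,'' and your computation is the general-$r$ version of that earlier proof (start from the factorization $c(A;r)=C^{-1}\cdot A$, invert to $A^{-1}\cdot C$, and multiply). You are also right to flag that the argument of $\phi'$ in the displayed statement should really be $\bar v/f(\bar v)$ after the substitution $x\mapsto\bar v$, consistent with the $r=1$ formula earlier in the paper.
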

We deduce from this last result that the generating function $Z(A;r)$ of the $Z$-sequence of $c(A;r)$ is given by
$$Z(A;r)=\frac{A(x)^2}{x}\left(1-\frac{1}{\phi'\left(\frac{x}{f(x)}\right)g(\bar{v}) f(\bar{v})^{r-1}}\right).$$
As in the last section, we can also show the following.
\begin{proposition}
$$c(A;r)^{-1} \cdot c(A;r+1)=\left(\frac{x}{\phi\left(\overline{\phi(f(\phi))}\right)}, x\right),$$ and
$$c(A;r+1)\cdot c(A;r)^{-1}=(f(\phi), x).$$
\end{proposition}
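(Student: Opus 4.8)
The plan is to reduce both identities to the factorizations established in the General Case theorem and its first corollary, namely $c(A;r)=\left(\phi' f(\phi)^{r-1},\phi\right)\cdot A$ and equivalently $c(A;r)=\left(\phi' g(\phi) f(\phi)^{r-1},\phi f(\phi)\right)$, together with the single analytic fact that $\phi=\mathrm{Rev}\!\left(x/f(x)\right)$, which I shall use in the equivalent forms $\overline{\phi}(x)=x/f(x)$, $\phi(\overline{\phi}(x))=x$, $\dfrac{\phi(x)}{f(\phi(x))}=x$, and $\phi(s)f(\phi(s))=x$ where $s:=\overline{\phi f(\phi)}$. No Lagrange inversion is needed here; everything is the Riordan group law plus these reversion identities.

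For the second identity $c(A;r+1)\cdot c(A;r)^{-1}=(f(\phi),x)$, I would argue without any reversion computation. By the first corollary, $c(A;r+1)=\left(\phi' g(\phi)f(\phi)^{r},\phi f(\phi)\right)$ has exactly the same second component as $c(A;r)=\left(\phi' g(\phi)f(\phi)^{r-1},\phi f(\phi)\right)$, and its first component is $f(\phi)$ times that of $c(A;r)$. Since left multiplication by the Riordan array $(f(\phi),x)$ multiplies the first component by $f(\phi)$ and leaves the second component unchanged, we get $c(A;r+1)=(f(\phi),x)\cdot c(A;r)$, and hence $c(A;r+1)\cdot c(A;r)^{-1}=(f(\phi),x)$.

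For the first identity I would use $c(A;r)=C_r$ with $C_r=\left(\phi' g(\phi)f(\phi)^{r-1},\phi f(\phi)\right)$ and compute $C_r^{-1}\cdot C_{r+1}$ directly from the group law. Writing $s=\overline{\phi f(\phi)}$, we have $C_r^{-1}=\left(\frac{1}{(\phi' g(\phi)f(\phi)^{r-1})(s)},\,s\right)$, so the second component of $C_r^{-1}\cdot C_{r+1}$ is $(\phi f(\phi))(s)=\phi(s)f(\phi(s))=x$, and the first component is the quotient $\dfrac{(\phi' g(\phi)f(\phi)^{r})(s)}{(\phi' g(\phi)f(\phi)^{r-1})(s)}=f(\phi(s))$. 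Thus $c(A;r)^{-1}\cdot c(A;r+1)=(f(\phi(s)),x)$, and since $\phi(s)f(\phi(s))=x$ forces $f(\phi(s))=x/\phi(s)=x/\phi(\overline{\phi f(\phi)})$, this is precisely $\left(\frac{x}{\phi(\overline{\phi(f(\phi))})},x\right)$. As a sanity check one can also derive this from the second identity by conjugation, $c(A;r)^{-1}c(A;r+1)=c(A;r)^{-1}\,(f(\phi),x)\,c(A;r)$, and see that the two forms agree; note also that independence of $r$ is automatic, since $s$ does not involve $r$.

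The only genuine bookkeeping is applying the reversion identities for $\phi$ and for $\phi f(\phi)$ in the correct direction; once $C_r$ and $C_r^{-1}$ are written down, the $r$-dependence cancels because passing from $r$ to $r+1$ inserts exactly one extra factor of $f(\phi)$ into the first component, and the second component is unchanged. I do not anticipate any obstacle beyond this care with compositions.
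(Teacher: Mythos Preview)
Your proof is correct and follows essentially the same route as the paper: the paper proves the special case $r=1$ (its Proposition for $c(A;1)^{-1}\cdot c(A;2)$ and the preceding corollary $c(A;2)=(f(\phi),x)\cdot c(A;1)$) by exactly this direct Riordan-group computation, and then for general $r$ simply remarks that the argument is identical. Your observation that the $r$-dependence cancels because $c(A;r)$ and $c(A;r+1)$ share the second component $\phi f(\phi)$ and differ only by a factor $f(\phi)$ in the first is precisely the mechanism driving both proofs.
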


\bigskip
\hrule
\bigskip
\noindent 2010 {\it Mathematics Subject Classification}: Primary
15B36; Secondary 11B83, 11C20.
\noindent \emph{Keywords:} Riordan group, Riordan array, central coefficients, Lagrange inversion.

\end{document}